\providecommand{\tabularnewline}{\\}
\numberwithin{equation}{section}
\numberwithin{figure}{section}
\theoremstyle{plain}
\newtheorem{thm}{\protect\theoremname}
  \theoremstyle{plain}
  \newtheorem{lem}[thm]{\protect\lemmaname}
  \theoremstyle{remark}
  \newtheorem{rem}[thm]{\protect\remarkname}
  \theoremstyle{definition}
  \newtheorem{defn}[thm]{\protect\definitionname}
\providecommand{\definitionname}{Definition}
  \providecommand{\lemmaname}{Lemma}
\providecommand{\theoremname}{Theorem}
\providecommand{\definitionname}{Definition}
  \providecommand{\lemmaname}{Lemma}
\providecommand{\theoremname}{Theorem}
  \providecommand{\definitionname}{Definition}
  \providecommand{\lemmaname}{Lemma}
  \providecommand{\remarkname}{Remark}
\providecommand{\theoremname}{Theorem}
\begin{document}

\title[Randomized ALS for Canonical Tensor Decomposition]{Randomized Alternating Least Squares for Canonical Tensor Decompositions:
Application to A PDE With Random Data}

\author{Matthew J Reynolds$\,^{*}$, Alireza Doostan$\,^{*}$ and Gregory
Beylkin$\,^{\dagger}$}

\address{$\,^{*}$ Department of Aerospace Engineering Sciences\\
 429 UCB\\
 University of Colorado at Boulder\\
 Boulder, CO 80309\\
 $\,^{\dagger}$Department of Applied Mathematics\\
 UCB 526\\
 University of Colorado at Boulder\\
 Boulder, CO 80309}

\thanks{This material is based upon work supported by the U.S. Department
of Energy Office of Science, Office of Advanced Scientific Computing
Research, under Award Number DE-SC0006402, and NSF grants DMS-1228359, 
DMS-1320919, and CMMI-1454601.}

\keywords{Separated representations; Tensor Decomposition; Randomized Projection;
Alternating Least Squares; Canonical Tensors; Stochastic PDE}
\begin{abstract}
This paper introduces a randomized variation of the alternating least
squares (ALS) algorithm for rank reduction of canonical tensor formats.
The aim is to address the potential numerical ill-conditioning of
least squares matrices at each ALS iteration. The proposed algorithm,
dubbed \emph{randomized ALS}, mitigates large condition numbers via
projections onto random tensors, a technique inspired by well-established
randomized projection methods for solving overdetermined least squares
problems in a matrix setting. A probabilistic bound on the condition
numbers of the randomized ALS matrices is provided, demonstrating
reductions relative to their standard counterparts. Additionally,
results are provided that guarantee comparable accuracy of the randomized
ALS solution at each iteration. The performance of the randomized
algorithm is studied with three examples, including manufactured tensors
and an elliptic PDE with random inputs. In particular, for the latter,
tests illustrate not only improvements in condition numbers, but also
improved accuracy of the iterative solver for the PDE solution represented
in a canonical tensor format.
\end{abstract}

\maketitle

\section{Introduction}

The approximation of multivariate functions is an essential tool for
numerous applications including computational chemistry \cite{APP-DAV:1981,KOL-BAD:2009},
data mining \cite{BA-BE-BR:2008,C-B-K-A:2007,KOL-BAD:2009}, and recently
uncertainty quantification \cite{DOO-IAC:2009,KHO-SCH:2011}. For
seemingly reasonable numbers of variables $d$, e.g. $\mathcal{O}(10)$,
reconstructing a function, for instance, using its sampled values,
requires computational costs that may be prohibitive. This is related
to the so-called ``curse of dimensionality.'' To mitigate this phenomenon,
we require such functions to have special \emph{structures} that can
be exploited by carefully crafted algorithms. One such structure is
that the function of interest $u(z_{1},z_{2},\dots,z_{d})$, depending
on variables $z_{1},z_{2},\dots,z_{d}$, admits a separated representation,
\cite{BEY-MOH:2002,BEY-MOH:2005,KOL-BAD:2009}, of the form

\begin{equation}
u\left(z_{1},z_{2},\dots z_{d}\right)=\sum_{l=1}^{r}\sigma_{l}u_{1}^{l}\left(z_{1}\right)u_{2}^{l}\left(z_{2}\right)\cdots u_{d}^{l}\left(z_{d}\right).\label{eq: separated representation}
\end{equation}
The number of terms, $r$, is called the separation rank of $u$ and
is assumed to be \emph{small}. Any discretization of the univariate
functions $u_{j}^{l}\left(z_{j}\right)$ in (\ref{eq: separated representation})
with $u_{i_{j}}^{l}=u_{j}^{l}\left(z_{i_{j}}\right)$, $i_{j}=1,\dots,M_{j}$
and $j=1,\dots,d$, leads to a Canonical Tensor Decomposition, or
CTD,

\begin{equation}
\mathbf{U}=U\left(i_{1}\dots i_{d}\right)=\sum_{l=1}^{r}\sigma_{l}u_{i_{1}}^{l}u_{i_{2}}^{l}\cdots u_{i_{d}}^{l}.\label{eq:introCTDelements}
\end{equation}

The functions $u_{j}^{l}\left(z_{j}\right)$ in (\ref{eq: separated representation})
and the corresponding vectors $u_{i_{j}}^{l}$ in (\ref{eq:introCTDelements})
are normalized to unit norm so that the magnitude of the terms is
carried by their positive $s$-values, $\sigma_{l}$. It is well understood
that when the separation rank $r$ is independent of $d$, the computation
costs and storage requirements of standard algebraic operations in
separated representations scale linearly in $d$, \cite{BEY-MOH:2005}.
For this reason, such representations are widely used for approximating
high-dimensional functions. To keep the computation of CTDs manageable,
it is crucial to maintain as small as possible separation rank. Common
operations involving CTDs, e.g. summations, lead to CTDs with separation
ranks that may be larger than necessary. Therefore, a standard practice
is to reduce the separation rank of a given CTD without sacrificing
much accuracy, for which the workhorse algorithm is Alternating Least
Squares (ALS) (see e.g., \cite{BEY-MOH:2002,BEY-MOH:2005,BRO:1997,CAR-CHA:1970,HARSHM:1970,KOL-BAD:2009,TOM-BRO:2006}).
This algorithm optimizes the separated representation (in Frobenius
norm) one direction at a time by solving least squares problems for
each direction. The linear systems for each direction are obtained
as normal equations by contracting over all tensor indices, $i=1,\dots,d$,
except those in the direction of optimization $k$.

It is well known that forming normal equations increases the condition
number of the least squares problem, see e.g. \cite{GOL-LOA:1996}.
In this paper we investigate the behavior of the condition numbers
of linear systems that arise in the ALS algorithm, and propose an
alternative formulation in order to avoid potential ill-conditioning.
As we shall see later, the normal equations in the ALS algorithm are
formed via the Hadamard (entry-wise) product of matrices for individual
directions. We show that in order for the resulting matrix to be ill-conditioned,
the matrices for all directions have to be ill-conditioned and obtain
estimates of these condition numbers. To improve the conditioning
of the linear systems, we propose a randomized version of ALS, called\emph{
randomized ALS}, where instead of contracting a tensor with itself
(in all directions but one), we contract it with a tensor composed
of random entries. We show that this random projection improves the
conditioning of the linear systems. However, its straightforward use
does not insure monotonicity in error reduction, unlike in standard
ALS. In order to restore monotonicity, we simply accept only random
projections that do not increase the error.

Our interest here in using CTDs stems from the efficiency of such
representations in tackling the issue of the curse of dimensionality
arising from the solution of PDEs with random data, as studied in
the context of Uncertainty Quantification (UQ). In the probabilistic framework, uncertainties are represented via
a finite number of random variables $z_{j}$ specified using, for
example, available experimental data or expert opinion. An important
task is to then quantify the dependence of quantities of interest
$u(z_{1},\dots,z_{d})$ on these random inputs. For this purpose,
approximation techniques based on separated representations have been
recently studied in \cite{DOO-IAC:2007,NOUY:2007,NOUY:2008,DOO-IAC:2009,NOUY:2010,DO-VA-IA:2013,HACKBU:2012,KHO-SCH:2011,Chinesta11,GR-KR-TO:2013,Hadigol14}
and proven effective in reducing the issue of curse of dimensionality.

The paper is organized as follows. In Section~\ref{sec:Notation-and-Background},
we introduce our notation and provide background information on tensors,
the standard ALS algorithm, and the random matrix theory used in this
paper. In Section~\ref{sec:Randomized-ALS-algorithm}, we introduce
randomized ALS and provide analysis of the algorithm's convergence
and the conditioning of matrices used. Section~\ref{sec:Examples}
contains demonstrations of randomized ALS and comparisons with standard
ALS on three examples. The most important of these examples provides
background on uncertainty quantification and demonstrates the application
of randomized ALS-based reduction as a step in finding the fixed point
solution of a stochastic PDE. We conclude with a discussion on our
new algorithm and future work in Section~\ref{sec:Discussion-and-conclusions}.

\section{Notation and Background\label{sec:Notation-and-Background}}

\subsection{Notation\label{sub:Notation}}

Our notation for tensors, i.e. $d$-directional arrays of numbers,
is boldfaced uppercase letters, e.g. $\mathbf{F}\in\mathbb{R}^{M_{1}\times\dots\times M_{d}}$.
These tensors are assumed to be in the CTD format, 
\[
\mathbf{F}=\sum_{l=1}^{r_{\mathbf{F}}}s_{l}^{\mathbf{F}}\mathbf{F}_{1}^{l}\circ\dots\circ\mathbf{F}_{d}^{l},
\]
where the factors $\mathbf{F}_{i}^{l}\in\mathbb{R}^{M_{k}}$ are vectors
with a subscript denoting the directional index and a superscript
the rank index, and $\circ$ denotes the standard vector outer product.
We write operators in dimension $d$ as $\mathbb{A}=A\left(j_{1},j_{1}^{\prime};\dots;j_{d},j_{d}^{\prime}\right)$,
while for standard matrices we use uppercase letters, e.g. $A\in\mathbb{R}^{N\times M}$.
Vectors are represented using boldfaced lowercase letters, e.g. $\mathbf{c}\in\mathbb{R}^{N}$,
while scalars are represented by lowercase letters. We perform three
operations on CTDs: addition, inner product, and the application of
a $d$-dimensional operator. 
\begin{itemize}
\item When two CTDs are added together, all terms are joined into a single
list and simply re-indexed. In such a case the nominal separation
rank is the sum of the ranks of the components, i.e., if CTDs are
of the ranks $\tilde{r}$ and $\hat{r}$, the output has rank $\tilde{r}+\hat{r}$. 
\item The inner product of two tensors in CTD format, $\tilde{\mathbf{F}}$
and $\hat{\mathbf{F}}$, is defined as 
\[
\left\langle \tilde{\mathbf{F}},\hat{\mathbf{F}}\right\rangle =\sum_{\tilde{l}=1}^{\tilde{r}}\sum_{\hat{l}=1}^{\hat{r}}\tilde{s}_{\tilde{l}}\hat{s}_{\hat{l}}\left\langle \tilde{\mathbf{F}}_{1}^{\tilde{l}},\hat{\mathbf{F}}_{1}^{\hat{l}}\right\rangle \dots\left\langle \tilde{\mathbf{F}}_{d}^{\tilde{l}},\hat{\mathbf{F}}_{d}^{\hat{l}}\right\rangle ,
\]
where the inner product $\left\langle \cdot,\cdot\right\rangle $
operating on vectors is the standard vector dot product. 
\item When applying a $d$-dimensional operator to a tensor in CTD format,
we have 
\[
\mathbb{A}\mathbf{F}=\sum_{\hat{l}=1}^{r_{\mathbb{A}}}\sum_{\tilde{l}=1}^{r_{\mathbf{F}}}s_{\hat{l}}^{\mathbb{A}}s_{\tilde{l}}^{\mathbf{F}}\left(\mathbb{A}_{1}^{\hat{l}}\mathbf{F}_{1}^{\tilde{l}}\right)\circ\dots\circ\left(\mathbb{A}_{d}^{\hat{l}}\mathbf{F}_{d}^{\tilde{l}}\right).
\]

\end{itemize}
We use the symbol $\Vert\cdot\Vert$ to denote the standard spectral
norm for matrices, as well as the Frobenius norm for tensors, 
\[
\Vert\mathbf{F}\Vert=\left\langle \mathbf{F},\mathbf{F}\right\rangle ^{\frac{1}{2}},
\]
and $\Vert\cdot\Vert_{1}$ and $\Vert\cdot\Vert_{2}$ to denote the
standard Euclidean $\ell_{1}$ and $\ell_{2}$ vector norms. 

For analysis involving matrices we use three different types of multiplication
in addition to the standard matrix multiplication. The Hadamard, or
entry-wise, product of two matrices $A$ and $B$ is denoted by $A*B$.
The Kronecker product of two matrices $A\in\mathbb{R}^{N_{A}\times M_{A}}$
and $B\in\mathbb{R}^{N_{B}\times M_{B}}$, is denoted as $A\otimes B$,
\[
A\otimes B=\left[\begin{array}{ccc}
A\left(1,1\right)B & \dots & A\left(1,M_{A}\right)B\\
\vdots & \ddots & \vdots\\
A\left(N_{A},1\right)B & \dots & A\left(N_{A},M_{A}\right)B
\end{array}\right].
\]
 The final type of matrix product we use, the Khatri-Rao product of
two matrices $A\in\mathbb{R}^{N_{A}\times M}$ and $B\in\mathbb{R}^{N_{B}\times M}$,
is denoted by $A\odot B,$
\[
A\odot B=\left[\begin{array}{cccc}
A\left(:,1\right)\otimes B\left(:,1\right) & A\left(:,2\right)\otimes B\left(:,2\right) & \dots & A\left(:,N\right)\otimes B\left(:,M\right)\end{array}\right].
\]
We also frequently use the maximal and minimal (non-zero) singular
values of a matrix, denoted as $\sigma_{\max}$ and $\sigma_{\min}$,
respectively.

\subsection{ALS algorithm}

Operations on tensors in CTD format lead to an increase of the nominal
separation rank. This separation rank is not necessarily the smallest
possible rank to represent the resulting tensor for a given accuracy.
The ALS algorithm attempts to find an approximation to the tensor
with minimal (or near minimal) separation rank. Specifically, given
a tensor $\mathbf{G}$ in CTD format with separation rank $r{}_{\mathbf{G}}$,
\[
\mathbf{G}=\sum_{l=1}^{r_{\mathbf{G}}}s_{l}^{\mathbf{G}}\mathbf{G}_{1}^{l}\circ\dots\circ\mathbf{G}_{d}^{l},
\]
and an acceptable error $\epsilon$, we attempt to find a representation
\[
\mathbf{F}=\sum_{\tilde{l}=1}^{r_{\mathbf{F}}}s_{\tilde{l}}^{\mathbf{F}}\mathbf{F}_{1}^{\tilde{l}}\circ\dots\circ\mathbf{F}_{d}^{\tilde{l}}
\]
with lower separation rank, $r_{\mathbf{F}}<r_{\mathbf{G}}$, such
that $\left\Vert \mathbf{F}-\mathbf{G}\right\Vert /\left\Vert \mathbf{G}\right\Vert <\epsilon$.

The standard ALS algorithm starts from an initial guess, $\mathbf{F}$,
with a small separation rank, e.g., $r_{\mathbf{F}}=1$. A sequence
of least squares problems in each direction is then constructed and
solved to update the representation. Given a direction $k$, we freeze
the factors in all other directions to produce a least squares problem
for the factors in direction $k$. This process is then repeated for
all directions $k$. One cycle through all $k$ is called an ALS sweep.
These ALS sweeps continue until the improvement in the residual $\left\Vert \mathbf{F}-\mathbf{G}\right\Vert /\left\Vert \mathbf{G}\right\Vert $
either drops below a certain threshold or reaches the desired accuracy,
i.e. $\left\Vert \mathbf{F}-\mathbf{G}\right\Vert /\left\Vert \mathbf{G}\right\Vert <\epsilon$.
If the residual is still above the target accuracy $\epsilon$, the
separation rank $r_{\mathbf{F}}$ is increased and we repeat the previous
steps for constructing the representation with the new separation
rank.

Specifically, as discussed in \cite{BEY-MOH:2005}, the construction
of the normal equations for direction $k$ can be thought of as taking
the derivatives of the Frobenius norm of $\Vert\mathbf{F}-\mathbf{G}\Vert^{2}$
with respect to the factors $\mathbf{F}_{k}^{\tilde{l}}$, ${\tilde{l}}=1,\dots,r_{\mathbf{F}}$,
and setting these derivatives to zero. This yields the normal equations
\begin{equation}
B_{k}\,\mathbf{c}_{j_{k}}=\mathbf{b}_{j_{k}},\label{eq:normal-equations-1}
\end{equation}
where $j_{k}$ corresponds to the $j$-th entry of $\mathbf{F}_{k}^{\tilde{l}}$
and $\mathbf{c}_{j_{k}}=c_{j_{k}}(\tilde{l})$ is a vector indexed
by $\tilde{l}$. Alternatively, the normal system (\ref{eq:normal-equations-1})
can be obtained by contracting all directions except the optimization
direction $k$, so that the entries of the matrix $B_{k}$ are the
Hadamard product of Gram matrices, 
\begin{equation}
B_{k}(\hat{l},\tilde{l})=\prod_{i\ne k}\left\langle \mathbf{F}_{i}^{\tilde{l}},\mathbf{F}_{i}^{\hat{l}}\right\rangle ,\label{eq:B-matrix-definition}
\end{equation}
and, accordingly, the right-hand side is 
\[
\mathbf{b}_{j_{k}}(\hat{l})=\sum_{l=1}^{r_{\mathbf{G}}}s_{l}^{\mathbf{G}}G_{k}^{l}\left(j_{k}\right)\prod_{i\ne k}\left\langle \mathbf{G}_{i}^{l},\mathbf{F}_{i}^{\hat{l}}\right\rangle .
\]
We solve \eqref{eq:normal-equations-1} for $\mathbf{c}_{j_{k}}$
and use the solution to update $\mathbf{F}_{k}^{\tilde{l}}$. Pseudocode
for the ALS algorithm is provided in Algorithm~\ref{alg:Alternating-least-squares},
where $max\_rank$ and $max\_iter$ denote the maximum separation
rank and the limit on the number of iterations. The threshold $\delta$
is used to decide if the separation rank needs to be increased.

\RestyleAlgo{boxruled}
\SetKwInOut{Input}{input}
\SetKwInOut{Output}{output}
\SetKw{Initialize}{initialize}
\SetKw{Return}{return}
\SetKw{Break}{break}

\begin{algorithm}
\Input{$\epsilon>0,\;\delta>0,\;\mathbf{G}\;\textrm{with rank}\; r_{\mathbf{G}},\; max\_rank,\; max\_iter$}

\Initialize $r_{\mathbf{F}}=1$ tensor $\mathbf{F}=\mathbf{F}_{1}^{1}\circ\dots\circ\mathbf{F}_{d}^{1}$
with randomly generated $\mathbf{F}_{k}^{1}$

\While{$r_{\mathbf{F}}\le max\_rank$} { $iter=1$\\
\If{$r_{\mathbf{F}}>1$}{add a random rank $1$ contribution to
$\mathbf{F}$: $\mathbf{F}=\mathbf{F}+\mathbf{F}_{1}^{r_{\mathbf{F}}}\circ\dots\circ\mathbf{F}_{d}^{r_{\mathbf{F}}}$}

$res=\left\Vert \mathbf{F}-\mathbf{G}\right\Vert /\left\Vert \mathbf{G}\right\Vert $

\While{$iter\le max\_iter$} { $res\_old=res$\\
 \For{$k=1,\dots,d$} { solve $B_{k}\mathbf{c}_{j_{k}}=\mathbf{b}_{j_{k}}$
for every $j_{k}$ in direction k\\
 define $\mathbf{v}_{\tilde{l}}=\left(c_{1}(\tilde{l}),\dots,c_{M_{k}}(\tilde{l})\right)$
for $\tilde{l}=1,\dots,r_{\mathbf{F}}$\\
 $s_{\tilde{l}}^{\mathbf{F}}=\left\Vert \mathbf{v}_{\tilde{l}}\right\Vert _{2}$
for $\tilde{l}=1,\dots,r_{\mathbf{F}}$\\
 $F_{k}^{\tilde{l}}(j_{k})=c_{j_{k}}(\tilde{l})/s_{\tilde{l}}^{\mathbf{F}}$
for $\tilde{l}=1,\dots,r_{\mathbf{F}}$ }

$res=\left\Vert \mathbf{F}-\mathbf{G}\right\Vert /\left\Vert \mathbf{G}\right\Vert $

\uIf{$res<\epsilon$}{\Return {$\mathbf{F}$}}\uElseIf{$\left|res-res\_old\right|<\delta$}{\Break}\Else{$iter=iter+1$}}

\protect\protect

$r_{\mathbf{F}}=r_{\mathbf{F}}+1$}

\protect

\Return { $\mathbf{F}$}

~

\protect\caption{Alternating least squares algorithm for rank reduction\label{alg:Alternating-least-squares}}
\end{algorithm}

A potential pitfall of the ALS algorithm is poor-conditioning of the
matrix $B_{k}$ since the construction of normal equations squares
the condition number as is well known in matrix problems.  An alternative
that avoids the normal equations is mentioned in the review paper
\cite{KOL-BAD:2009}, but it is not feasible for problems with even
moderately large dimension (e.g. $d=5$).

\subsection{Estimate of condition numbers of least squares matrices}

It is an empirical observation that the condition number of the matrices
$B_{k}$ is sometimes significantly better than the condition numbers
of some of the Gram matrices comprising the Hadamard product in (\ref{eq:B-matrix-definition}).
In fact we have 
\begin{lem}
\label{lem:formal estimate}Let $A$ and $B$ be Gram matrices with
all diagonal entries equal to $1$. Then we have 
\[
\sigma_{\min}\left(B\right)\le\sigma_{\min}\left(A*B\right)\le\sigma_{\max}\left(A*B\right)\le\sigma_{\max}\left(B\right).
\]
If the matrix $B$ is positive definite, then 
\[
\kappa\left(A*B\right)\le\kappa\left(B\right).
\]

\end{lem}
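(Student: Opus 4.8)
The plan is to exploit the fact that $A$ and $B$, being Gram matrices, are symmetric and positive semidefinite, so that by the Schur product theorem $A*B$ is positive semidefinite as well. Consequently all three matrices have singular values that coincide with their (nonnegative) eigenvalues, and it suffices to bound the extreme eigenvalues of $A*B$ from below and above by those of $B$, after which the statement about singular values follows verbatim.

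The key algebraic identity I would establish first is a rank-one expansion of the Hadamard product. Writing $A=\sum_{k}x_{k}x_{k}^{T}$ as a sum of rank-one positive semidefinite pieces (for instance from its spectral decomposition, absorbing the eigenvalues into the $x_{k}$), I would check entrywise that $(x_{k}x_{k}^{T})*B=D_{x_{k}}BD_{x_{k}}$, where $D_{x_{k}}$ is the diagonal matrix formed from the vector $x_{k}$; indeed both sides have $(i,j)$ entry $(x_{k})_{i}(x_{k})_{j}B(i,j)$. Summing over $k$ yields
\[
A*B=\sum_{k}D_{x_{k}}\,B\,D_{x_{k}}.
\]
For any unit vector $u$ I would then write the quadratic form as $u^{T}(A*B)u=\sum_{k}(D_{x_{k}}u)^{T}B(D_{x_{k}}u)$ and sandwich each summand between $\sigma_{\min}(B)\Vert D_{x_{k}}u\Vert_{2}^{2}$ and $\sigma_{\max}(B)\Vert D_{x_{k}}u\Vert_{2}^{2}$, which is legitimate since $B$ is symmetric positive semidefinite.

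The step that does the real work is the diagonal-recovery computation $\sum_{k}\Vert D_{x_{k}}u\Vert_{2}^{2}=\sum_{i}u_{i}^{2}\sum_{k}(x_{k})_{i}^{2}=\sum_{i}A(i,i)\,u_{i}^{2}$, valid because the $i$-th diagonal entry of $A$ is exactly $\sum_{k}(x_{k})_{i}^{2}$. Since every diagonal entry of $A$ equals $1$, this collapses to $\sum_{i}u_{i}^{2}=\Vert u\Vert_{2}^{2}=1$. Feeding this back into the two-sided estimate gives $\sigma_{\min}(B)\le u^{T}(A*B)u\le\sigma_{\max}(B)$ for every unit vector $u$, and taking the infimum and supremum of the Rayleigh quotient over such $u$ produces precisely
\[
\sigma_{\min}(B)\le\sigma_{\min}(A*B)\le\sigma_{\max}(A*B)\le\sigma_{\max}(B).
\]

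For the condition-number claim, positive definiteness of $B$ forces $\sigma_{\min}(B)>0$, and the left inequality just proved then guarantees $\sigma_{\min}(A*B)>0$ as well, so $\kappa(A*B)=\sigma_{\max}(A*B)/\sigma_{\min}(A*B)$ is well defined; combining $\sigma_{\max}(A*B)\le\sigma_{\max}(B)$ with $\sigma_{\min}(A*B)\ge\sigma_{\min}(B)$ yields $\kappa(A*B)\le\kappa(B)$ at once. The only genuine obstacle is setting up and verifying the decomposition $A*B=\sum_{k}D_{x_{k}}BD_{x_{k}}$ together with the diagonal-recovery identity; once these are in hand, the remaining estimates are elementary Rayleigh-quotient bounds.
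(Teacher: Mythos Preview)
Your argument is correct. The rank-one expansion $A*B=\sum_{k}D_{x_{k}}BD_{x_{k}}$ together with the diagonal-recovery identity $\sum_{k}(x_{k})_{i}^{2}=A(i,i)=1$ is exactly the mechanism that makes the extreme-eigenvalue bounds work, and your Rayleigh-quotient sandwich is clean.

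By way of comparison, the paper does not give an argument at all: it simply observes that Gram matrices are symmetric positive semidefinite and invokes \cite[Theorem~5.3.4]{HOR-JOH:1994}. What you have written is, in effect, a self-contained proof of (the relevant special case of) that cited theorem. The benefit of your route is that it is transparent and requires no external reference; the benefit of the paper's route is brevity. Substantively the two coincide, since the standard proof of the Horn--Johnson result proceeds along the same lines you chose.
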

Since Gram matrices are symmetric non-negative definite, the proof
of Lemma~\ref{lem:formal estimate} follows directly from \cite[Theorem 5.3.4]{HOR-JOH:1994}.
This estimate implies that it is sufficient for only one of the matrices
to be well conditioned to assure that the Hadamard product is also
well conditioned. In other words, it is necessary for all directional
Gram matrices to be ill-conditioned to cause the ill-conditioning
of the Hadamard product. Clearly, this situation can occur and we
address it in the paper.

\subsection{Modification of normal equations: motivation for randomized methods }

We motivate our approach by first considering an alternative to forming
normal equations for ordinary matrices (excluding the QR factorization
that can be easily used for matrices). Given a matrix $A\in\mathbb{R}^{N\times n}$,
$N\ge n$, we can multiply $A\mathbf{x}=\mathbf{b}$ by a matrix $R\in\mathbb{R}^{n^{\prime}\times N}$
with independent random entries and then solve 
\begin{equation}
RA\mathbf{x}=R\mathbf{b},\label{eq:random-equations}
\end{equation}
instead (see, e.g. \cite{HA-MA-TR:2011,ROK-TYG:2008,SARLOS:2006,W-L-R-T:2008}
). The solution of this system, given that $R$ is of appropriate
size (i.e., $n^{\prime}$ is large enough), will be close to the least
squares solution \cite[Lemma 2]{ROK-TYG:2008}. In \cite{ROK-TYG:2008},
\eqref{eq:random-equations} is used to form a preconditioner and
an initial guess for solving $\min~\Vert A\mathbf{x}-\mathbf{b}\Vert_{2}$
via a preconditioned conjugate gradient method. However, for our application
we are interested in using equations of the form \eqref{eq:random-equations}
in the Hadamard product in \eqref{eq:B-matrix-definition}. We observe
that $RA$ typically has a smaller condition number than $A^{T}A$.
To see why, recall that for full-rank, square matrices $A$ and $B$,
a bound on the condition number is 
\[
\kappa(AB)\le\kappa(A)\kappa(B).
\]
However, for rectangular full-rank matrices $A\in\mathbb{R}^{r^{\prime}\times N}$
and $B\in\mathbb{R}^{N\times r}$, $r\le r^{\prime}\le N$, this inequality
does not necessarily hold. Instead, we have the inequality 
\begin{equation}
\kappa(AB)\le\kappa(A)\frac{\sigma_{1}\left(B\right)}{\sigma_{\min}\left(P_{A^{T}}\left(B\right)\right)},\label{eq:rec-cond-num-ineq}
\end{equation}
where $P_{A^{T}}\left(B\right)$ is the projection of $B$ onto the
row space of $A$ (for a proof of this inequality, see Appendix~A).
If $A$ has a small condition number (for example, when \textbf{$A$}
is a Gaussian random matrix, see \cite{CHE-DON:2005,EDELMA:1988,EDELMA:1989})
and we were to assume $\sigma_{\min}\left(P_{A^{T}}\left(B\right)\right)$
is close to $\sigma_{\min}\left(B\right)$, we obtain condition numbers
smaller than $\kappa^{2}(B)$. The assumption that $\sigma_{\min}\left(P_{A^{T}}\left(B\right)\right)$
is close to $\sigma_{\min}\left(B\right)$ is the same as assuming
the columns of $B$ lie within the subspace spanned by the row of
$A$. This is achieved by choosing $r^{\prime}$ to be larger than
$r$ when $A$ is a randomized matrix.

\subsection{Definitions and random matrix theory}

The main advantage of our approach is an improved condition number
for the linear system solved at every step of the ALS algorithm. We
use a particular type of random matrices to derive bounds on the condition
number: the rows are independently distributed random vectors, but
the columns are not (instead of the standard case where all entries
are i.i.d). Such matrices were studied extensively by Vershynin \cite{VERSHY:2012}
and we rely heavily on this work for our estimates. To proceed, we
need the following definitions from \cite{VERSHY:2012}.
\begin{rem}
Definitions involving random variables, and vectors composed of random
variables, are not consistent with the notation of the rest of the
paper, outlined in Section~\ref{sub:Notation}.\end{rem}
\begin{defn}
\cite[Definition 5.7]{VERSHY:2012} Let $\mathbb{P}\{\cdot\}$ denote
the probability of a set and $\mathbb{E}$ the mathematical expectation
operator. Also, let $X$ be a random variable that satisfies one of
the three following equivalent properties,
\begin{flalign*}
1. & \,\,\mathbb{P}\left\{ \left|X\right|>t\right\} \le\exp\left(1-t^{2}/K_{1}^{2}\right)\,\,\textrm{for all}\,\, t\ge0\\
2. & \,\,\left(\mathbb{E}\left|X\right|^{p}\right)^{1/p}\le K_{2}\sqrt{p}\,\,\textrm{for all}\,\, p\ge1\\
3. & \,\,\mathbb{E}\exp\left(X^{2}/K_{3}^{2}\right)\le e,
\end{flalign*}
where the constants $K_{i}$, $i=1,2,3$, differ from each other by
at most an absolute constant factor (see \cite[Lemma 5.5]{VERSHY:2012}
for a proof of the equivalence of these properties). Then $X$ is
called a sub-Gaussian random variable. The sub-Gaussian norm of $X$
is defined as the smallest $K_{2}$ in property~2, i.e., 
\[
\left\Vert X\right\Vert _{\psi_{2}}=\underset{p\ge1}{\sup}\frac{\left(\mathbb{E}\left|X\right|^{p}\right)^{1/p}}{\sqrt{p}}.
\]

\end{defn}
\noindent Examples of sub-Gaussian random variables include Gaussian
and Bernoulli random variables. We also present definitions for sub-Gaussian
random vectors and their norm. 
\begin{defn}
\cite[Definition 5.7]{VERSHY:2012} A random vector $X\in\mathbb{R}^{n}$
is called a sub-Gaussian random vector if $\left\langle X,\mathbf{x}\right\rangle $
is a sub-Gaussian random variable for all $\mathbf{x}\in\mathbb{R}^{n}$.
The sub-Gaussian norm of $X$ is subsequently defined as 
\[
\left\Vert X\right\Vert _{\psi_{2}}=\underset{\mathbf{x}\in\mathcal{S}^{n-1}}{\sup}\left\Vert \left\langle X,\mathbf{x}\right\rangle \right\Vert _{\psi_{2}},
\]
where $\mathcal{S}^{n-1}$ is the unit Euclidean sphere.
\end{defn}
\begin{defn}
\cite[Definition 5.19]{VERSHY:2012} A random vector $X\in\mathbb{R}^{n}$
is called isotropic if its second moment matrix, $\Sigma=\Sigma\left(X\right)=\mathbb{E}\left[XX^{T}\right]$,
is equal to identity, i.e. $\Sigma\left(X\right)=I$. This definition
is equivalent to 
\[
\mathbb{E}\left\langle X,\mathbf{x}\right\rangle ^{2}=\left\Vert \mathbf{x}\right\Vert _{2}^{2}\,\,\,\,\textrm{for all}\,\,\mathbf{x}\in\mathbb{R}^{n}.
\]

\end{defn}
The following theorem from \cite{VERSHY:2012} provides bounds on
the condition numbers of matrices whose rows are independent sub-Gaussian
isotropic random variables. 
\begin{thm}
\label{thm:Vershynin}\cite[Theorem 5.38]{VERSHY:2012} Let $A$ be
an $N\times n$ matrix whose rows $A\left(i,:\right)$ are independent,
sub-Gaussian isotropic random vectors in $\mathbb{R}^{n}$. Then for
every $t\ge0$, with probability at least $1-2\,\exp\left(-ct^{2}\right)$,
one has 
\begin{equation}
\sqrt{N}-C\sqrt{n}-t\le\sigma_{\min}\left(A\right)\le\sigma_{\max}\left(A\right)\le\sqrt{N}+C\sqrt{n}+t.\label{eq:Vershynin-theorem-bounds}
\end{equation}
Here $C=C_{K}$, $c=c_{K}>0$, depend only on the sub-Gaussian norm
$K=\underset{i}{\max}\left\Vert A\left(i,:\right)\right\Vert _{\psi_{2}}.$ 
\end{thm}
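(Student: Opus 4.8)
The plan is to follow the standard non-asymptotic argument for random matrices with independent rows: reduce the two-sided singular value bound to a single operator-norm estimate, control that operator norm on a net of the sphere, and apply a Bernstein-type tail bound at each net point followed by a union bound. Writing $A_{i}=A(i,:)$ for the rows, isotropy gives $\mathbb{E}\left[A_{i}A_{i}^{T}\right]=I$, hence $\mathbb{E}\left[\frac{1}{N}A^{T}A\right]=I$. By the standard lemma relating the extreme singular values of $B=A/\sqrt{N}$ to $\left\Vert B^{T}B-I\right\Vert$ (\cite[Lemma~5.36]{VERSHY:2012}), the bounds in (\ref{eq:Vershynin-theorem-bounds}) follow once we establish
\[
\left\Vert \frac{1}{N}A^{T}A-I\right\Vert \le\max(\delta,\delta^{2}),\qquad\delta:=C\sqrt{n/N}+t/\sqrt{N}.
\]
Indeed, this forces every eigenvalue of $\frac{1}{N}A^{T}A$ to lie within $\max(\delta,\delta^{2})$ of $1$, so every singular value of $A/\sqrt{N}$ lies in $[1-\delta,1+\delta]$; multiplying through by $\sqrt{N}$ recovers (\ref{eq:Vershynin-theorem-bounds}), and the entire problem reduces to bounding this one operator norm.

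Since $\frac{1}{N}A^{T}A-I$ is symmetric, its operator norm can be estimated on a net. Fixing a $\frac{1}{4}$-net $\mathcal{N}$ of the unit sphere $\mathcal{S}^{n-1}$, whose cardinality is at most $9^{n}$, a standard approximation argument gives
\[
\left\Vert \frac{1}{N}A^{T}A-I\right\Vert \le2\max_{\mathbf{x}\in\mathcal{N}}\left|\frac{1}{N}\Vert A\mathbf{x}\Vert_{2}^{2}-1\right|.
\]
For a fixed $\mathbf{x}\in\mathcal{S}^{n-1}$ I would expand $\frac{1}{N}\Vert A\mathbf{x}\Vert_{2}^{2}=\frac{1}{N}\sum_{i=1}^{N}\left\langle A_{i},\mathbf{x}\right\rangle ^{2}$. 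Each $\left\langle A_{i},\mathbf{x}\right\rangle $ is sub-Gaussian with norm at most $K$, so $\left\langle A_{i},\mathbf{x}\right\rangle ^{2}$ is sub-exponential with norm controlled by $K^{2}$, while isotropy gives $\mathbb{E}\left\langle A_{i},\mathbf{x}\right\rangle ^{2}=1$. Thus $\frac{1}{N}\Vert A\mathbf{x}\Vert_{2}^{2}-1$ is an average of $N$ independent, centered, sub-exponential random variables.

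I would then invoke the Bernstein-type inequality for sums of independent sub-exponential variables (\cite[Corollary~5.17]{VERSHY:2012}) to obtain, for each fixed $\mathbf{x}$ and each $\epsilon\ge0$,
\[
\mathbb{P}\left\{ \left|\frac{1}{N}\Vert A\mathbf{x}\Vert_{2}^{2}-1\right|\ge\frac{\epsilon}{2}\right\} \le2\exp\left(-c_{1}N\min(\epsilon^{2},\epsilon)\right),
\]
with $c_{1}$ depending only on $K$, and take a union bound over the at most $9^{n}$ net points. Choosing $\epsilon=\max(\delta,\delta^{2})$ yields $N\min(\epsilon^{2},\epsilon)=N\delta^{2}=(C\sqrt{n}+t)^{2}\ge C^{2}n+t^{2}$, so that taking $C$ large enough that $c_{1}C^{2}\ge\ln 9$ renders the $n$-dependent exponent nonpositive, absorbs the net cardinality $9^{n}=e^{n\ln 9}$, and leaves the claimed probability $1-2\exp(-ct^{2})$ with $c=c_{1}$. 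The main obstacle, and the only step needing genuine care, is this pointwise concentration estimate: verifying that $\left\langle A_{i},\mathbf{x}\right\rangle ^{2}$ is sub-exponential with norm uniformly bounded by a multiple of $K^{2}$, and calibrating the constants so that the exponential decay $e^{-c_{1}N\delta^{2}}$ dominates $e^{n\ln 9}$ uniformly in $t\ge0$. The net-to-operator-norm passage and the eigenvalue reduction are routine once this tail bound is in hand.
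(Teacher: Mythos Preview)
Your proposal is correct and follows essentially the same approach as the paper's sketch: reduce the singular value bounds to $\left\Vert \frac{1}{N}A^{T}A-I\right\Vert\le\max(\delta,\delta^{2})$ via Lemma~\ref{lem:Vershynin}, control this on a $\frac{1}{4}$-net of $\mathcal{S}^{n-1}$ of cardinality at most $9^{n}$, apply a Bernstein-type exponential deviation inequality to $\frac{1}{N}\sum_{i}\langle A_{i},\mathbf{x}\rangle^{2}-1$ at each net point, and take a union bound with $C$ chosen large enough to absorb the $9^{n}$ factor. Your write-up is in fact more explicit than the paper's sketch about why $\langle A_{i},\mathbf{x}\rangle^{2}$ is sub-exponential and how the constants are calibrated.
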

An outline of the proof of Theorem~\ref{thm:Vershynin} will be useful
for deriving our own results, so we provide a sketch in Appendix~A.
The following lemma is used to prove Theorem~\ref{thm:Vershynin},
and will also be useful later on in the paper. We later modify it
to prove a version of Theorem~\ref{thm:Vershynin} that works for
sub-Gaussian, non-isotropic random vectors. 
\begin{lem}
\label{lem:Vershynin}\cite[Lemma 5.36]{VERSHY:2012} Consider a matrix
$B$ that satisfies 
\[
\left\Vert B^{T}B-I\right\Vert <\max\left(\delta,\delta^{2}\right)
\]
for some $\delta>0.$ Then 
\begin{equation}
1-\delta\le\sigma_{\min}\left(B\right)\le\sigma_{\max}\left(B\right)\le1+\delta.\label{eq:vershynin-SV-bounds}
\end{equation}
Conversely, if $B$ satisfies (\ref{eq:vershynin-SV-bounds}) for
some $\delta>0,$ then $\left\Vert B^{T}B-I\right\Vert <3\,\max\left(\delta,\delta^{2}\right)$.
\end{lem}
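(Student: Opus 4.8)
The plan is to reduce both implications to a single spectral identity. Observe that $B^{T}B-I$ is symmetric, so its spectral norm equals its spectral radius; since the eigenvalues of $B^{T}B$ are exactly the squared singular values $\sigma_{i}(B)^{2}$, the eigenvalues of $B^{T}B-I$ are $\sigma_{i}(B)^{2}-1$, and therefore
\[
\left\Vert B^{T}B-I\right\Vert =\max_{i}\left|\sigma_{i}(B)^{2}-1\right|.
\]
Thus the hypothesis $\left\Vert B^{T}B-I\right\Vert <\max(\delta,\delta^{2})$ is equivalent to $\left|\sigma_{i}^{2}-1\right|<\max(\delta,\delta^{2})$ for every $i$, while the conclusion $1-\delta\le\sigma_{\min}\le\sigma_{\max}\le1+\delta$ is equivalent to $\left|\sigma_{i}-1\right|\le\delta$ for every $i$. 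Both directions then follow by manipulating the factorization $\sigma_{i}^{2}-1=(\sigma_{i}-1)(\sigma_{i}+1)$ together with $\sigma_{i}\ge0$.

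For the forward direction I would factor and use $\sigma_{i}+1\ge1$ to obtain $\left|\sigma_{i}-1\right|\le\left|\sigma_{i}^{2}-1\right|<\max(\delta,\delta^{2})$. When $\delta\le1$ this reads $\left|\sigma_{i}-1\right|<\delta$, which is exactly the desired two-sided bound. When $\delta>1$ one argues directly: the upper estimate $\sigma_{i}^{2}<1+\delta^{2}\le(1+\delta)^{2}$ gives $\sigma_{i}<1+\delta$, while the lower bound $1-\delta<0\le\sigma_{i}$ holds automatically.

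For the converse I would start from $1-\delta\le\sigma_{i}\le1+\delta$, which yields $\left|\sigma_{i}-1\right|\le\delta$ and $\sigma_{i}+1\le2+\delta$, hence
\[
\left|\sigma_{i}^{2}-1\right|=\left|\sigma_{i}-1\right|\,(\sigma_{i}+1)\le\delta(2+\delta)=2\delta+\delta^{2}.
\]
A short case check then finishes the proof: if $\delta\le1$ then $2\delta+\delta^{2}\le3\delta$, and if $\delta>1$ then $2\delta+\delta^{2}\le3\delta^{2}$, so in either case the quantity is bounded by $3\max(\delta,\delta^{2})$, as claimed.

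The argument is essentially bookkeeping once the spectral identity is in place, and the only step demanding care is the piecewise role of $\max(\delta,\delta^{2})$: the quadratic gap between $\left|\sigma_{i}^{2}-1\right|$ and $\left|\sigma_{i}-1\right|$ behaves differently for small and for large $\delta$, which is precisely why the statement carries $\max(\delta,\delta^{2})$ rather than a single power of $\delta$ and why the factor $3$ surfaces in the converse. I expect that case split, rather than any delicate inequality, to be the main thing to get right.
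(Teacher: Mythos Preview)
The paper does not supply its own proof of this lemma; it is quoted from Vershynin \cite[Lemma~5.36]{VERSHY:2012} and used as a cited result. Your argument is correct and is in fact the standard proof Vershynin gives: the spectral identity $\|B^{T}B-I\|=\max_{i}|\sigma_{i}^{2}-1|$ reduces both implications to a scalar estimate relating $|\sigma^{2}-1|$ and $|\sigma-1|$ for $\sigma\ge 0$, and the factorization $\sigma^{2}-1=(\sigma-1)(\sigma+1)$ together with $\sigma+1\ge 1$ handles both directions via the case split on $\delta\le 1$ versus $\delta>1$ exactly as you describe. One cosmetic point: at $\delta=1$ your converse bound gives $2\delta+\delta^{2}=3=3\max(\delta,\delta^{2})$, so the inequality is non-strict there; this is a boundary artifact of how the lemma is stated rather than a defect in your reasoning.
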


\section{Randomized ALS algorithm\label{sec:Randomized-ALS-algorithm}}

\subsection{Alternating least squares algorithm using random matrices}

We propose the following alternative to using the normal equations
in ALS algorithms: instead of (\ref{eq:B-matrix-definition}), define
the entries of $B_{k}$ via randomized projections,
\begin{equation}
B_{k}(\hat{l},\tilde{l})=\prod_{i\ne k}\left\langle \mathbf{F}_{i}^{\tilde{l}},\mathbf{R}_{i}^{\hat{l}}\right\rangle ,\label{eq:randomized-ALS-B}
\end{equation}
where $\mathbf{R}_{i}^{\hat{l}}$ is the $\hat{l}$-th column of a
matrix $R_{i}\in\mathbb{R}^{M_{i}\times r'}$, $r'>r$, with random
entries corresponding to direction $i$. The choice of $r'>r$ is
made to reduce the condition number of $B_{k}$. As shown in Section~\ref{sub:Bounding-the-condition-number},
as $r/r^{\prime}\rightarrow0$ the bound on $\kappa\left(B_{k}\right)$
goes to $\kappa\left(B_{k}\right)\le\kappa\left(\left(B_{k}^{ALS}\right)^{\frac{1}{2}}\right)$,
where $B_{k}^{ALS}$ is the $B_{k}$ matrix for standard ALS, i.e.
\eqref{eq:B-matrix-definition}. In this paper we consider independent
signed Bernoulli random variables, i.e., $R_{i}(j_{k},\hat{l})$ is
either $-1$ or $1$ each with probability $1/2$. We have had some
success using standard Gaussian random variables in our experiments
as well. The proposed change also alters the right-hand side of the
normal equations (\ref{eq:normal-equations-1}), 
\begin{equation}
\mathbf{b}_{j_{k}}(\hat{l})=\sum_{l=1}^{r_{\mathbf{G}}}s_{l}^{\mathbf{G}}G_{k}^{l}\left(j_{k}\right)\prod_{i\ne k}\left\langle \mathbf{G}_{i}^{l},\mathbf{R}_{i}^{\hat{l}}\right\rangle .\label{eq:RHS-rand-tensor}
\end{equation}
Equivalently, $B_{k}$ may be written as 
\[
B_{k}=\prod_{i\ne k}R_{i}^{T}F_{i}.
\]
Looking ahead, we choose random matrices $R_{i}$ such that $B_{k}$
is a tall, rectangular matrix. Solving the linear system (\ref{eq:normal-equations-1})
with rectangular $B_{k}$ will require a pseudo-inverse, computed
via either the singular value decomposition (SVD) or a QR algorithm. 

To further contrast the randomized ALS algorithm with the standard
ALS algorithm, we highlight two differences: firstly, the randomized
ALS trades the monotonic reduction of approximation error (a property
of the standard ALS algorithm) for better conditioning. To adjust
we use a simple tactic: if a randomized ALS sweep (over all directions)
decreases the error, we keep the resulting approximation. Otherwise,
we discard the sweep, generate independent random matrices $R_{i}$,
and rerun the sweep. Secondly, the randomized ALS algorithm can be
more computationally expensive than the standard one. This is due
to the rejection scheme outlined above and the fact that $B_{k}$
in the randomized algorithm has a larger number of rows than its standard
counterpart, i.e., $r'>r$. Pseudocode of our new algorithm is presented
in Algorithm~\ref{alg:Randomized-alternating-least-squares}.

\begin{algorithm}[h]
\Input{$\epsilon>0,\;\mathbf{G}\;\textrm{with rank}\; r_{\mathbf{G}},\; max\_tries,\; max\_rank,\; max\_iter$}

\Initialize $r_{\mathbf{F}}=1$ tensor $\mathbf{F}=\mathbf{F}_{1}^{1}\circ\dots\circ\mathbf{F}_{d}^{1}$
with randomly generated $\mathbf{F}_{k}^{1}$\\

\While{$r_{\mathbf{F}}\le max\_rank$} {$tries=1$

$iter=1$

construct randomized tensor $\mathbf{R}$

\If{$r_{\mathbf{F}}>1$}{add a random rank $1$ contribution to
$\mathbf{F}$: $\mathbf{F}=\mathbf{F}+\mathbf{F}_{1}^{r_{\mathbf{F}}}\circ\dots\circ\mathbf{F}_{d}^{r_{\mathbf{F}}}$}

\While{$iter\le max\_iter$ and $tries\le max\_tries$} { $\mathbf{F}_{old}=\mathbf{F}$\\
\For{$k=1,\dots,d$} { construct $B_{k}$, using \eqref{eq:randomized-ALS-B}\\
solve $B_{k}\mathbf{c}_{j_{k}}=\mathbf{b}_{j_{k}}$ for every $j_{k}$
in direction k\\
define $\mathbf{v}_{\tilde{l}}=\left(c_{1}(\tilde{l}),\dots,c_{M_{k}}(\tilde{l})\right)$
for $\tilde{l}=1,\dots,r_{\mathbf{F}}$\\
$s_{\tilde{l}}^{\mathbf{F}}=\left\Vert \mathbf{v}_{\tilde{l}}\right\Vert _{2}$
for $\tilde{l}=1,\dots,r_{\mathbf{F}}$\\
$F_{k}^{\tilde{l}}(j_{k})=c_{j_{k}}(\tilde{l})/s_{\tilde{l}}^{\mathbf{F}}$
for $\tilde{l}=1,\dots,r_{\mathbf{F}}$ } \uIf{$\left\Vert \mathbf{F}-\mathbf{G}\right\Vert /\left\Vert \mathbf{G}\right\Vert <\epsilon$}
{\Return{ $\mathbf{F}$} }\uElseIf{$\left\Vert \mathbf{F}_{old}-\mathbf{G}\right\Vert /\left\Vert \mathbf{G}\right\Vert <\left\Vert \mathbf{F}-\mathbf{G}\right\Vert /\left\Vert \mathbf{G}\right\Vert $}
{$\mathbf{F}=\mathbf{F}_{old}$ \\
$tries=tries+1$\\
$iter=iter+1$} \Else {$tries=1$\\
$iter=iter+1$} } 

\protect\protect

$r_{\mathbf{F}}=r_{\mathbf{F}}+1$}

\protect

\Return {$\mathbf{F}$}

~

\protect\caption{Randomized alternating least squares algorithm for rank reduction\label{alg:Randomized-alternating-least-squares}}
\end{algorithm}

\begin{rem}
\label{rem:precond-approach}We have explored an alternative approach
using projections onto random tensors, different from Algorithm~\ref{alg:Randomized-alternating-least-squares}.
Instead of using $B_{k}$ in \eqref{eq:randomized-ALS-B} to solve
for $\mathbf{c}_{j_{k}}$, we use the QR factorization of $B_{k}$
to form a preconditioner matrix, similar to the approach of \cite{ROK-TYG:2008}
for solving overdetermined least squares problems in a matrix setting.
This preconditioner is used to improve the condition number of $B_{k}$
in \eqref{eq:B-matrix-definition}. The approach is different from
Algorithm~\ref{alg:Randomized-alternating-least-squares}: we solve
the same equations as the standard ALS algorithm, but in a better
conditioned manner. Solving the same equations preserves the monotone
error reduction property of standard ALS. With Algorithm~\ref{alg:Randomized-alternating-least-squares}
the equations we solve are different, but, as shown in Section~\eqref{sub:Convergence},
the solutions of each least squares problem are close to the those
obtained by the standard ALS algorithm.
\end{rem}
We provide convergence results and theoretical bounds on the condition
number of $B_{k}$ with entries (\ref{eq:randomized-ALS-B}) in Sections~\ref{sub:Convergence}
and~\ref{sub:Bounding-the-condition-number}, respectively. Additionally,
in Section~\ref{sec:Examples}, we empirically demonstrate the superior
conditioning properties of $B_{k}$ defined in (\ref{eq:randomized-ALS-B})
relative to those given by the standard ALS in (\ref{eq:B-matrix-definition}).

\subsection{Convergence of the randomized ALS algorithm\label{sub:Convergence} }

Before deriving bounds on the condition number of (\ref{eq:randomized-ALS-B}),
it is important to discuss the convergence properties of our algorithm.
To do so for our tensor algorithm, we derive a convergence result
similar to \cite[Lemma 4.8]{W-L-R-T:2008}. In this analysis, we flatten
our tensors into large matrices and use results from random matrix
theory to show convergence. First, we construct the large matrices
used in this section from (\ref{eq:randomized-ALS-B}). Writing the
inner product as a sum allows us to group all the summations together, 

We have
\begin{eqnarray*}
B_{k}\left(\hat{l},\tilde{l}\right) & = & \prod_{i\ne k}\sum_{j_{i}=1}^{M_{i}}F_{i}^{\tilde{l}}\left(j_{i}\right)R_{i}^{\hat{l}}\left(j_{i}\right)\\
 & = & \sum_{j_{1}=1}^{M_{1}}\dots\sum_{j_{k-1}=1}^{M_{k-1}}\sum_{j_{k+1}=1}^{M_{k+1}}\dots\sum_{j_{d}=1}^{M_{d}}\left(F_{1}^{\tilde{l}}\left(j_{1}\right)\dots\right)\left(R_{1}^{\hat{l}}\left(j_{1}\right)\dots\right),
\end{eqnarray*}
where we have expanded the product to get the sum of the products
of individual entries. Introducing a multi-index $j=\left(j_{1},\dots,j_{k-1},j_{k+1},\dots,j_{d}\right)$,
we define two matrices, $A_{k}\in\mathbb{R}^{M\times r}$ and $R_{k}\in\mathbb{R}^{M\times r^{\prime}}$,
where $M=\prod_{i\ne k}M_{i}$ is large, i.e., we write 
\begin{eqnarray*}
A_{k}\left(j,\tilde{l}\right) & = & F_{1}^{\tilde{l}}\left(j_{1}\right)\dots F_{k-1}^{\tilde{l}}\left(j_{k-1}\right)F_{k+1}^{\tilde{l}}\left(j_{k+1}\right)\dots F_{d}^{\tilde{l}}\left(j_{d}\right)\\
R_{k}\left(j,\hat{l}\right) & = & R_{1}^{\hat{l}}\left(j_{1}\right)\dots R_{k-1}^{\hat{l}}\left(j_{k-1}\right)R_{k+1}^{\hat{l}}\left(j_{k+1}\right)\dots R_{d}^{\hat{l}}\left(j_{d}\right).
\end{eqnarray*}
We note that these matrices can also be written as Khatri-Rao products,
\begin{eqnarray}
A_{k} & = & F_{1}\odot\dots\odot F_{k-1}\odot F_{k+1}\odot\dots\odot F_{d}\nonumber \\
R_{k} & = & R_{1}\odot\dots\odot R_{k-1}\odot R_{k+1}\odot\dots\odot R_{d}.\label{eq:Khatri-Rao-defintions}
\end{eqnarray}
Since $M$ is large, $M\gg r'>r$, both $A$ and $R$ are rectangular
matrices. Similarly, we rewrite a vector $\mathbf{b}$ in \eqref{eq:RHS-rand-tensor},
\begin{eqnarray*}
\mathbf{b}_{j_{k}}(\hat{l}) & = & \sum_{l=1}^{r_{\mathbf{G}}}s_{l}^{\mathbf{G}}G_{k}^{l}\left(j_{k}\right)\sum_{j_{1}=1}^{M_{1}}\dots\sum_{j_{k-1}=1}^{M_{k-1}}\sum_{j_{k+1}=1}^{M_{k+1}}\dots\sum_{j_{d}=1}^{M_{d}}\left(G_{1}^{l}\left(j_{1}\right)\dots\right)\left(R_{1}^{\hat{l}}\left(j_{1}\right)\dots\right),
\end{eqnarray*}
using the multi-index $j$ as 
\[
\mathbf{b}_{k}\left(j\right)=\sum_{l=1}^{r_{\mathbf{G}}}s_{l}^{\mathbf{G}}G_{k}^{l}\left(j_{k}\right)\left(G_{1}^{l}\left(j_{1}\right)\dots G_{k-1}^{l}\left(j_{k-1}\right)G_{k+1}^{l}\left(j_{k+1}\right)\dots G_{d}^{l}\left(j_{d}\right)\right).
\]
Using the introduced notation, $A_{k},\, R_{k},$ and $\mathbf{b}_{k}$,
we rewrite the normal equations (\ref{eq:normal-equations-1}) for
direction $k$ and coordinate $j_{k}$ as 
\begin{equation}
A_{k}^{T}A_{k}\mathbf{c}_{k}=A_{k}^{T}\mathbf{b},\label{eq:orig_ls}
\end{equation}
and the randomized version of those equations as 
\begin{equation}
R_{k}^{T}A_{k}\mathbf{c}_{k}=R_{k}^{T}\mathbf{b}_{k}.\label{eq:random_ls}
\end{equation}

We highlight the notable difference between the random matrix $R_{k}$
above and those found in the usual matrix settings, for instance,
in randomized least squares regression \cite{HA-MA-TR:2011,ROK-TYG:2008,SARLOS:2006,W-L-R-T:2008}.
Specifically, in the former, the entries of $R_{k}$ are not statistically
independent and are products of random variables, whereas in the latter
the entries are often i.i.d realizations of single random variables.
In the present work, we utilize signed Bernoulli random variables,
where the entries of $R_{k}$ are dependent but also follow a signed
Bernoulli distribution. Whether there exists an optimal choice of
distribution for setting the entries $R{}_{k}$ in the tensor case
requires a careful examination which is beyond the scope of this paper.

Next, we present a convergence result showing that the solution to
the least squares problem at each iteration of randomized ALS is close
to the solution we would get using standard ALS. For ease of notation,
we drop the subscript $k$ from $A$ and $R$.
\begin{lem}
\label{lem:conv-bound-with-cond-number}Given $A\in\mathbb{R}^{M\times r}$
and $R\in\mathbb{R}^{M\times r^{\prime}}$ where $r\le r^{\prime}\le M$,
and assuming that $\mathbf{x}\in\mathbb{R}^{r}$ is the solution that
minimizes $\left\Vert R^{T}A\hat{\mathbf{x}}-R^{T}\mathbf{b}\right\Vert $$_{2}$
and $\mathbf{y}\in\mathbb{R}^{r}$ is the solution that minimizes
$\Vert A\hat{\mathbf{y}}-\mathbf{b}\Vert_{2}$, then 
\begin{equation}
\Vert A\mathbf{x}-\mathbf{b}\Vert_{2}\le\kappa\left(R^{T}Q\right)\,\left\Vert A\mathbf{y}-\mathbf{b}\right\Vert _{2},\label{eq:desired-inequality}
\end{equation}
where $Q\in\mathbb{R}^{M\times r_{Q}}$, $r_{Q}\le r+1$, is a matrix
with orthonormal columns from the QR factorization of the augmented
matrix \textup{$\left[A\,\brokenvert\,\mathbf{b}\right]$ }and where
$R^{T}Q$ is assumed to have full rank.
\end{lem}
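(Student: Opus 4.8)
The plan is to push the whole comparison down into the column space of the augmented matrix $[A\,\brokenvert\,\mathbf{b}]$, where the randomized and ordinary least squares residuals can be read off directly. First I would take the QR factorization $[A\,\brokenvert\,\mathbf{b}]=QS$, with $Q\in\mathbb{R}^{M\times r_Q}$ having orthonormal columns and $r_Q\le r+1$. Since every column of $A$ as well as the vector $\mathbf{b}$ is a column of the factored matrix, both lie in the column space of $Q$; hence I may write $A=QP$ with $P=Q^{T}A\in\mathbb{R}^{r_Q\times r}$ and $\mathbf{b}=Q\mathbf{d}$ with $\mathbf{d}=Q^{T}\mathbf{b}\in\mathbb{R}^{r_Q}$.

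The key observation is that both residuals collapse onto this small space. Because $Q$ has orthonormal columns, for any $\mathbf{v}\in\mathbb{R}^{r}$ one has $\Vert A\mathbf{v}-\mathbf{b}\Vert_{2}=\Vert Q(P\mathbf{v}-\mathbf{d})\Vert_{2}=\Vert P\mathbf{v}-\mathbf{d}\Vert_{2}$. Setting $W=R^{T}Q\in\mathbb{R}^{r^{\prime}\times r_Q}$, we likewise get $R^{T}A\mathbf{v}-R^{T}\mathbf{b}=W(P\mathbf{v}-\mathbf{d})$, so the randomized residual equals $\Vert W(P\mathbf{v}-\mathbf{d})\Vert_{2}$. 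Consequently $\mathbf{y}$ minimizes $\Vert P\hat{\mathbf{y}}-\mathbf{d}\Vert_{2}$, the vector $\mathbf{x}$ minimizes $\Vert W(P\hat{\mathbf{x}}-\mathbf{d})\Vert_{2}$, and $\kappa(R^{T}Q)=\kappa(W)$.

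I would then finish with a singular-value sandwich, which avoids any orthogonality/Pythagoras argument entirely. By optimality of $\mathbf{x}$, $\Vert W(P\mathbf{x}-\mathbf{d})\Vert_{2}\le\Vert W(P\mathbf{y}-\mathbf{d})\Vert_{2}$. Using that $W$ has full column rank, the lower bound $\sigma_{\min}(W)\Vert P\mathbf{x}-\mathbf{d}\Vert_{2}\le\Vert W(P\mathbf{x}-\mathbf{d})\Vert_{2}$ applies on the left and the upper bound $\Vert W(P\mathbf{y}-\mathbf{d})\Vert_{2}\le\sigma_{\max}(W)\Vert P\mathbf{y}-\mathbf{d}\Vert_{2}$ on the right. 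Chaining these three inequalities and dividing by $\sigma_{\min}(W)$ yields $\Vert P\mathbf{x}-\mathbf{d}\Vert_{2}\le\kappa(W)\Vert P\mathbf{y}-\mathbf{d}\Vert_{2}$, which is exactly \eqref{eq:desired-inequality} once the residuals are translated back through the norm-preserving identity of the previous paragraph.

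The routine bookkeeping is harmless; the one genuinely delicate point is the treatment of the rectangular condition number. I must ensure that $W=R^{T}Q$ has full column rank $r_Q$, so that $\sigma_{\min}(W)>0$ and the bound $\Vert W\mathbf{u}\Vert_{2}\ge\sigma_{\min}(W)\Vert\mathbf{u}\Vert_{2}$ holds for \emph{all} $\mathbf{u}\in\mathbb{R}^{r_Q}$; this is precisely where the hypothesis that $R^{T}Q$ has full rank (and implicitly $r^{\prime}\ge r_Q$) is used, and it is also why $\kappa$ must be read as the ratio of the largest to the smallest \emph{nonzero} singular value. A secondary point worth stating explicitly is that $\mathbf{b}$ lies in the column space of $Q$, which is what makes the residual reduction an exact equality rather than merely an inequality; this is immediate since $\mathbf{b}$ is literally one of the columns of the matrix that was factored.
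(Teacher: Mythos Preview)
Your argument is correct and follows essentially the same route as the paper: both reduce to the column space of $Q$ via the QR factorization of $[A\,\brokenvert\,\mathbf{b}]$, invoke the optimality of $\mathbf{x}$ for the randomized problem, and then bound using the extreme singular values of $R^{T}Q$. The only cosmetic difference is that the paper packages the lower singular-value bound by constructing an explicit left inverse $\Theta=Q\bigl((R^{T}Q)^{T}(R^{T}Q)\bigr)^{-1}(R^{T}Q)^{T}$ and computing $\Vert\Theta\Vert=1/\sigma_{\min}(R^{T}Q)$, whereas you apply the inequality $\Vert W\mathbf{u}\Vert_{2}\ge\sigma_{\min}(W)\Vert\mathbf{u}\Vert_{2}$ directly; your version is a bit more streamlined but not conceptually different.
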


\begin{proof}
We form the augmented matrix $\left[A\,\brokenvert\,\mathbf{b}\right]$
and find its QR decomposition, $\left[A\,\brokenvert\,\mathbf{b}\right]=QT$,
where $T=\left[T_{A}\,\brokenvert\, T_{\mathbf{b}}\right]$, $T_{A}\in\mathbb{R}^{r_{Q}\times r}$
and $T_{\mathbf{b}}\in\mathbb{R}^{r_{Q}}$, and $Q\in\mathbb{R}^{M\times r_{Q}}$
has orthonormal columns. Therefore, we have 
\begin{eqnarray*}
A & = & QT_{A}\\
\mathbf{b} & = & QT_{\mathbf{b}}.
\end{eqnarray*}
Using these decompositions of $A$ and $\mathbf{b}$, we define a
matrix $\Theta$ such that 
\begin{eqnarray*}
\Theta R^{T}A & = & A\\
\Theta R^{T}\mathbf{b} & = & \mathbf{b},
\end{eqnarray*}
and arrive at 
\[
\Theta=Q\left(\left(R^{T}Q\right)^{T}\left(R^{T}Q\right)\right)^{-1}\left(RQ\right)^{T},
\]
assuming that $\left(R^{T}Q\right)^{T}\left(R^{T}Q\right)$ is invertible.
We address this additional assumption when discussing the bounds of
the extreme singular values of $R^{T}Q$ in Theorem~\ref{thm:conv-thrm}.

Starting from the left-hand side of (\ref{eq:desired-inequality}),
we have 
\begin{eqnarray*}
\left\Vert A\mathbf{x}-\mathbf{b}\right\Vert _{2} & = & \left\Vert \Theta R^{T}A\mathbf{x}-\Theta R^{T}\mathbf{b}\right\Vert _{2}\\
 & \le & \left\Vert \Theta\right\Vert \left\Vert R^{T}A\mathbf{x}-R^{T}\mathbf{b}\right\Vert _{2}\\
 & \le & \left\Vert \Theta\right\Vert \left\Vert R^{T}A\mathbf{y}-R^{T}\mathbf{b}\right\Vert _{2}.
\end{eqnarray*}
Since multiplication by $A$ maps a vector to the column space of
$A$, there exists $T_{\mathbf{y}}\in\mathbb{R}^{r_{Q}}$ such that
$A\mathbf{y}=QT_{\mathbf{y}}$. Hence, we obtain 
\begin{eqnarray*}
\left\Vert A\mathbf{x}-b\right\Vert _{2} & \le & \left\Vert \Theta\right\Vert \left\Vert R^{T}QT_{\mathbf{y}}-R^{T}QT_{\mathbf{b}}\right\Vert _{2}\\
 & \le & \left\Vert \Theta\right\Vert \left\Vert R^{T}Q\right\Vert \left\Vert T_{\mathbf{y}}-T_{\mathbf{b}}\right\Vert _{2}\\
 & \le & \left\Vert \Theta\right\Vert \left\Vert R^{T}Q\right\Vert \left\Vert A\mathbf{y}-\mathbf{b}\right\Vert _{2},
\end{eqnarray*}
where in the last step we used the orthonormality of the columns of
$Q$.

Next we estimate norms, $\left\Vert \Theta\right\Vert $ and $\left\Vert R^{T}Q\right\Vert $.
First, we decompose $R^{T}Q$ using the singular value decomposition,
$R^{T}Q=U\Sigma V^{T}$. From the definition of the spectral norm
we know $\left\Vert R^{T}Q\right\Vert =\sigma_{\max}\left(R^{T}Q\right)$.
Using the SVD of $R^{T}Q$ and the definition of $\Theta$ along with
our assumption of invertibility of $\left(R^{T}Q\right)^{T}\left(R^{T}Q\right)$
, we write 
\begin{eqnarray*}
\left\Vert \Theta\right\Vert  & = & \left\Vert Q\left(\left(R^{T}Q\right)^{T}\left(R^{T}Q\right)\right)^{-1}\left(R^{T}Q\right)^{T}\right\Vert \\
 & = & \left\Vert \left(V\Sigma^{2}V^{T}\right)^{-1}V\Sigma U^{T}\right\Vert \\
 & = & \left\Vert V\Sigma^{-2}V^{T}V\Sigma U^{T}\right\Vert \\
 & = & \left\Vert V\Sigma^{-1}U^{T}\right\Vert .
\end{eqnarray*}
Hence $\left\Vert \Theta\right\Vert =1/\sigma_{\min}(R^{T}Q)$, and
the bound is 
\begin{eqnarray*}
\left\Vert A\mathbf{x}-\mathbf{b}\right\Vert _{2} & \le & \sigma_{\max}\left(R^{T}Q\right)/\sigma_{\min}(R^{T}Q)\,\left\Vert A\mathbf{y}-\mathbf{b}\right\Vert _{2}\\
 & \le & \kappa\left(R^{T}Q\right)\,\left\Vert A\mathbf{y}-\mathbf{b}\right\Vert _{2}.
\end{eqnarray*}

\end{proof}
We later use results from \cite{VERSHY:2012} to bound $\kappa\left(R^{T}Q\right)$
since $R^{T}Q$ is a random matrix whose rows are independent from
one another, but whose columns are not. To use this machinery, specifically
Theorem~\ref{thm:Vershynin}, we require the following lemma. 
\begin{lem}
\label{lem:RQ-isotropic}$R^{T}Q$ is a random matrix with isotropic
rows.\end{lem}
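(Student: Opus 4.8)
The plan is to identify the rows of $R^T Q$ explicitly, reduce isotropy of these rows to isotropy of the columns of $R$, and then verify the latter directly from the product (Khatri--Rao) structure of $R$ together with the mean-zero, unit-second-moment properties of the signed Bernoulli entries. Throughout I would use the convention of the lemma statement that $R = R_k$ and $Q$ is the fixed matrix with orthonormal columns coming from the QR factorization of $\left[A\,\brokenvert\,\mathbf{b}\right]$, so $Q^T Q = I_{r_Q}$ and all randomness lives in $R$.

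First I would observe that the $\hat{l}$-th row of $R^T Q$ is the random vector $X_{\hat{l}} = Q^T R(:,\hat{l}) \in \mathbb{R}^{r_Q}$, where $R(:,\hat{l})$ is the $\hat{l}$-th column of $R$. By the definition of isotropy it suffices to show that the second moment matrix of each $X_{\hat{l}}$ is the identity. Pulling the deterministic $Q$ outside the expectation gives
\[
\mathbb{E}\bigl[X_{\hat{l}} X_{\hat{l}}^{T}\bigr] = Q^{T}\,\mathbb{E}\bigl[R(:,\hat{l})\,R(:,\hat{l})^{T}\bigr]\,Q,
\]
so the entire claim reduces to showing that each column $R(:,\hat{l})$ is itself isotropic in $\mathbb{R}^{M}$, i.e. $\mathbb{E}[R(:,\hat{l})\,R(:,\hat{l})^{T}] = I_{M}$; the orthonormality $Q^{T}Q = I$ then finishes it.

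To establish $\mathbb{E}[R(:,\hat{l})\,R(:,\hat{l})^{T}] = I_{M}$, I would compute a general entry using the product structure $R_{k}(j,\hat{l}) = \prod_{i\ne k} R_{i}^{\hat{l}}(j_i)$ for multi-indices $j = (j_i)_{i\ne k}$ and $j' = (j_i')_{i\ne k}$. Since the scalar entries $R_{i}^{\hat{l}}(j_i)$ are mutually independent across both $i$ and $j_i$, the expectation factors as
\[
\mathbb{E}\bigl[R_{k}(j,\hat{l})\,R_{k}(j',\hat{l})\bigr] = \prod_{i\ne k}\mathbb{E}\bigl[R_{i}^{\hat{l}}(j_i)\,R_{i}^{\hat{l}}(j_i')\bigr].
\]
Each signed Bernoulli variable satisfies $\mathbb{E}[R_{i}^{\hat{l}}(j_i)] = 0$ and $\mathbb{E}[R_{i}^{\hat{l}}(j_i)^{2}] = 1$, so a given factor equals $1$ when $j_i = j_i'$ and $0$ (by independence and vanishing mean) when $j_i \ne j_i'$. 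Hence the product is $1$ exactly when $j = j'$ and $0$ otherwise, which is precisely $\mathbb{E}[R(:,\hat{l})\,R(:,\hat{l})^{T}] = I_{M}$. Substituting back yields $\mathbb{E}[X_{\hat{l}} X_{\hat{l}}^{T}] = Q^{T}Q = I_{r_Q}$, the desired isotropy of every row.

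The main subtlety, and the step I expect to need the most careful wording, is that — unlike the classical setting where the random matrix has i.i.d. entries — the entries of a single column $R(:,\hat{l})$ are \emph{not} independent of one another, since they are assembled from shared Bernoulli factors, so distinct entries can be correlated. What rescues the argument is that for any two distinct multi-indices $j \ne j'$ there is at least one direction $i$ with $j_i \ne j_i'$, and that direction contributes an independent, mean-zero factor that annihilates the expectation; everything else is an immediate consequence of $Q^{T}Q = I$ and the elementary moments of the signed Bernoulli distribution. I would also remark that the rows $X_{\hat{l}}$ for distinct $\hat{l}$ are mutually independent, since distinct columns of $R$ are built from disjoint collections of Bernoulli variables — a fact not needed for isotropy itself but required to invoke Theorem~\ref{thm:Vershynin} in the sequel.
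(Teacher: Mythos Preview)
Your proof is correct and follows essentially the same approach as the paper: both reduce isotropy of the rows of $R^{T}Q$ to showing $\mathbb{E}[R(:,\hat{l})R(:,\hat{l})^{T}] = I_{M}$ and then invoke $Q^{T}Q = I$. The only cosmetic difference is that the paper packages the computation via Kronecker products (writing $R(:,\hat{l}) = \bigotimes_{i\ne k} R_i(:,\hat{l})$ and using $\mathbb{E}[R(:,\hat{l})R(:,\hat{l})^{T}] = \bigotimes_{i\ne k}\mathbb{E}[R_i(:,\hat{l})R_i(:,\hat{l})^{T}] = \bigotimes_{i\ne k} I_{M_i} = I_M$), whereas you carry out the equivalent entry-wise calculation directly.
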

\begin{proof}
Using the second moment matrix, we show that the rows of $R^{T}Q$
are isotropic. Given a row of $R^{T}Q$ written in column form, $\left[R\left(:,\hat{l}\right)^{T}Q\right]^{T}=Q^{T}R\left(:,\hat{l}\right)$,
we form the second moment matrix,
\[
\mathbb{E}\left[Q^{T}R\left(:,\hat{l}\right)R\left(\hat{l},:\right)^{T}Q\right]=Q^{T}\mathbb{E}\left[R\left(:,\hat{l}\right)R\left(\hat{l},:\right)^{T}\right]Q.
\]
 and show 
\[
\mathbb{E}\left[R\left(:,\hat{l}\right)R\left(:,\hat{l}\right)^{T}\right]=I_{M\times M}.
\]
Hence $\mathbb{E}\left[Q^{T}R\left(:,\hat{l}\right)R\left(\hat{l},:\right)^{T}Q\right]=Q^{T}Q=I_{r_{Q}\times r_{Q}}$
and $R^{T}Q$ is isotropic.%
\begin{comment}
\begin{proof}
\begin{eqnarray*}
\mathbb{E}\left[XX^{T}\right] & = & \mathbb{E}\left[Q^{T}R\left(:,\hat{l}\right)R\left(\hat{l},:\right)^{T}Q\right]\\
 & = & Q^{T}\mathbb{E}\left[R\left(:,\hat{l}\right)R\left(\hat{l},:\right)^{T}\right]Q.
\end{eqnarray*}
\end{proof}
\end{comment}

From the Khatri-Rao product definition of the matrix $R$ \eqref{eq:Khatri-Rao-defintions},
we write a column of $R$ as
\[
R\left(:,\hat{l}\right)=\bigotimes_{\begin{array}{c}
i=1:d\\
i\ne k
\end{array}}R_{i}\left(:,\hat{l}\right).
\]
Therefore, using properties of the Kronecker product (see, e.g. \cite[equation (2.2)]{KOL-BAD:2009})
we can switch the order of the regular matrix product and the Kronecker
products,
\[
R\left(:,\hat{l}\right)R\left(:,\hat{l}\right)^{T}=\bigotimes_{\begin{array}{c}
i=1:d\\
i\ne k
\end{array}}R_{i}\left(:,\hat{l}\right)R_{i}\left(:,\hat{l}\right)^{T}.
\]
Taking the expectation and moving it inside the Kronecker product
gives us
\begin{eqnarray*}
\mathbb{E}\left[R\left(:,\hat{l}\right)R\left(:,\hat{l}\right)^{T}\right] & = & \bigotimes_{\begin{array}{c}
i=1:d\\
i\ne k
\end{array}}\mathbb{E}\left[R_{i}\left(:,\hat{l}\right)R_{i}\left(:,\hat{l}\right)^{T}\right]\\
 & = & \bigotimes_{\begin{array}{c}
i=1:d\\
i\ne k
\end{array}}I_{M_{i}\times M_{i}}\\
 & = & I_{M\times M}.
\end{eqnarray*}

\end{proof}
Since $R^{T}Q$ is a tall rectangular matrix ($R^{T}Q\in\mathbb{R}^{r^{\prime}\times r_{Q}}$)
with independent sub-Gaussian isotropic rows, we may use Theorem~5.39
from Vershynin to bound the extreme singular values. 
\begin{lem}
\label{lem:RQ bound}For every $t\ge0$, with probability at least
$1-2\exp\left(-ct^{2}\right)$ we have
\begin{equation}
\kappa\left(R^{T}Q\right)\le\frac{1+C\sqrt{\left(r+1\right)/r^{\prime}}+t/\sqrt{r^{\prime}}}{1-C\sqrt{\left(r+1\right)/r^{\prime}}-t/\sqrt{r^{\prime}}},\label{eq:bound-on-RtQ}
\end{equation}
where $C=C_{K}$ and $c=c_{K}>0$ depend only on the sub-Gaussian
norm $K=\max_{i}\left\Vert R\left(:,i\right)^{T}Q\right\Vert _{\psi_{2}}$
of the rows of $R^{T}Q$.\end{lem}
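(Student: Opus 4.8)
The plan is to apply the singular-value bound of Theorem~\ref{thm:Vershynin} directly to the matrix $R^{T}Q\in\mathbb{R}^{r^{\prime}\times r_{Q}}$, treating it as a matrix with $N=r^{\prime}$ rows and $n=r_{Q}\le r+1$ columns (note $r^{\prime}>r$ forces $r^{\prime}\ge r+1\ge r_{Q}$, so the matrix is tall as required). By Lemma~\ref{lem:RQ-isotropic} its rows are isotropic, so two hypotheses of Theorem~\ref{thm:Vershynin} remain to be checked: that the rows are independent and that they are sub-Gaussian.

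For independence, recall from the Khatri--Rao description \eqref{eq:Khatri-Rao-defintions} that the $\hat{l}$-th column of $R$ is $R(:,\hat{l})=\bigotimes_{i\ne k}R_{i}(:,\hat{l})$, so distinct columns $\hat{l}\ne\hat{l}^{\prime}$ are built from disjoint collections of the independent signed Bernoulli entries of the factor matrices $R_{i}$. Hence the columns $R(:,\hat{l})$ are mutually independent, and therefore so are the rows $Q^{T}R(:,\hat{l})$ of $R^{T}Q$. For the sub-Gaussian property, each entry of $R(:,\hat{l})$ is a product of $\pm1$ variables and is thus bounded by $1$; a bounded random vector is sub-Gaussian, and its image under the norm non-increasing map $Q^{T}$ is again sub-Gaussian. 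The resulting sub-Gaussian norm is exactly the quantity $K=\max_{i}\left\Vert R(:,i)^{T}Q\right\Vert _{\psi_{2}}$ in the statement, and it is this $K$ on which the constants $C=C_{K}$ and $c=c_{K}$ depend.

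With the hypotheses verified, Theorem~\ref{thm:Vershynin} gives, for every $t\ge0$ and with probability at least $1-2\exp(-ct^{2})$,
\[
\sqrt{r^{\prime}}-C\sqrt{r_{Q}}-t\le\sigma_{\min}\left(R^{T}Q\right)\le\sigma_{\max}\left(R^{T}Q\right)\le\sqrt{r^{\prime}}+C\sqrt{r_{Q}}+t.
\]
Using $r_{Q}\le r+1$, so that $\sqrt{r_{Q}}\le\sqrt{r+1}$, and forming the ratio $\kappa(R^{T}Q)=\sigma_{\max}/\sigma_{\min}$ yields
\[
\kappa\left(R^{T}Q\right)\le\frac{\sqrt{r^{\prime}}+C\sqrt{r+1}+t}{\sqrt{r^{\prime}}-C\sqrt{r+1}-t};
\]
dividing numerator and denominator by $\sqrt{r^{\prime}}$ produces the claimed bound \eqref{eq:bound-on-RtQ}.

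The main point to watch is the lower singular-value estimate. The inequality \eqref{eq:bound-on-RtQ} is meaningful, and $R^{T}Q$ is guaranteed to have the full rank assumed in Lemma~\ref{lem:conv-bound-with-cond-number}, only when the denominator $1-C\sqrt{(r+1)/r^{\prime}}-t/\sqrt{r^{\prime}}$ is strictly positive; this requires $r^{\prime}$ chosen sufficiently larger than $r$ (with $t$ not too large), which is precisely the regime $r^{\prime}>r$ emphasized earlier, and the bound tends to $1$ as $r/r^{\prime}\to0$. Beyond this bookkeeping, the only genuine subtlety is confirming the sub-Gaussian claim and checking that the substitution $r_{Q}\mapsto r+1$ loosens the estimate in the correct direction for \emph{both} extreme singular values, so that the ratio remains a valid upper bound on $\kappa(R^{T}Q)$.
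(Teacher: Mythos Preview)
Your proof is correct and follows essentially the same approach as the paper: invoke Lemma~\ref{lem:RQ-isotropic} for isotropy, apply Theorem~\ref{thm:Vershynin} to $R^{T}Q\in\mathbb{R}^{r'\times r_{Q}}$, replace $r_{Q}$ by the upper bound $r+1$, and take the ratio of the resulting singular-value estimates. Your version is in fact slightly more thorough than the paper's, which simply cites Lemma~\ref{lem:RQ-isotropic} and Theorem~\ref{thm:Vershynin} without explicitly rechecking the independence and sub-Gaussianity of the rows.
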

\begin{proof}
Using Lemma~\ref{lem:RQ-isotropic} and Theorem~\ref{thm:Vershynin},
we have the following bound on the extreme condition numbers of $R^{T}Q\in\mathbb{R}^{r^{\prime}\times r_{Q}}$
for every $t\ge0$, with probability at least $1-2\exp\left(-ct^{2}\right)$,

\[
\sqrt{r^{\prime}}-C\sqrt{r_{Q}}-t\le\sigma_{\min}\left(R^{T}Q\right)\le\sigma_{\max}\left(R^{T}Q\right)\le\sqrt{r^{\prime}}+C\sqrt{r_{Q}}+t,
\]
where $C=C_{K}$ and $c=c_{K}>0$ depend only on the sub-Gaussian
norm $K=\max_{i}\left\Vert \left(R^{T}Q\right)_{i}\right\Vert _{\psi_{2}}$
of the rows of $R^{T}Q$. Since $r_{Q}\le r+1$, we have 
\[
\sqrt{r^{\prime}}-C\sqrt{r+1}-t\le\sigma_{\min}\left(R^{T}Q\right)\le\sigma_{\max}\left(R^{T}Q\right)\le\sqrt{r^{\prime}}+C\sqrt{r+1}+t,
\]
with the same probability. 

We now state the convergence result.\end{proof}
\begin{thm}
\label{thm:conv-thrm} Given $A\in\mathbb{R}^{M\times r}$ and $R\in\mathbb{R}^{M\times r^{\prime}}$
where $r\le r^{\prime}\le M$, and assuming that $\mathbf{x}\in\mathbb{R}^{r}$
is the solution that minimizes $\left\Vert R^{T}A\hat{\mathbf{x}}-R^{T}\mathbf{b}\right\Vert $$_{2}$
and $\mathbf{y}\in\mathbb{R}^{r}$ is the solution that minimizes
$\Vert A\hat{\mathbf{y}}-\mathbf{b}\Vert_{2}$, then for every $t\ge0$,
with probability at least $1-2\exp\left(-ct^{2}\right)$ we have 
\[
\left\Vert A\mathbf{x}-\mathbf{b}\right\Vert \le\frac{1+C\sqrt{\left(r+1\right)/r^{\prime}}+t/\sqrt{r^{\prime}}}{1-C\sqrt{\left(r+1\right)/r^{\prime}}-t/\sqrt{r^{\prime}}}\,\left\Vert A\mathbf{y}-\mathbf{b}\right\Vert ,
\]
where $C=C_{K}$ and $c=c_{K}>0$ depend only on the sub-Gaussian
norm $K=\max_{i}\left\Vert R\left(:,i\right)^{T}Q\right\Vert _{\psi_{2}}$.
The matrix $Q\in\mathbb{R}^{M\times r_{Q}}$, $r_{Q}\le r+1$, is
composed of orthonormal columns from the QR factorization of the augmented
matrix \textup{$\left[A\,\brokenvert\,\mathbf{b}\right]$ }and $R^{T}Q$
is assumed to have full rank.\end{thm}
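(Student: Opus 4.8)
The plan is to combine the two results already in hand, Lemma~\ref{lem:conv-bound-with-cond-number} and Lemma~\ref{lem:RQ bound}, since together they supply exactly the two ingredients on the right-hand side of the claimed inequality. Lemma~\ref{lem:conv-bound-with-cond-number} is a purely deterministic estimate that reduces the entire question to controlling the single scalar $\kappa(R^{T}Q)$, while Lemma~\ref{lem:RQ bound} is the probabilistic estimate that bounds precisely this scalar with the stated probability. The theorem is therefore essentially a corollary obtained by chaining these two statements.

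Concretely, first I would apply Lemma~\ref{lem:conv-bound-with-cond-number} with the same data $A$, $R$, $\mathbf{b}$, $\mathbf{x}$, $\mathbf{y}$ and with $Q$ taken from the QR factorization of $\left[A\,\brokenvert\,\mathbf{b}\right]$, which yields the deterministic bound
\[
\left\Vert A\mathbf{x}-\mathbf{b}\right\Vert \le \kappa\left(R^{T}Q\right)\,\left\Vert A\mathbf{y}-\mathbf{b}\right\Vert .
\]
Next I would verify that the hypotheses of Lemma~\ref{lem:RQ bound} hold: by Lemma~\ref{lem:RQ-isotropic} the matrix $R^{T}Q$ has independent sub-Gaussian isotropic rows, and $r_{Q}\le r+1$ by construction of $Q$, so Theorem~\ref{thm:Vershynin} applies and gives, for every $t\ge0$ and with probability at least $1-2\exp\left(-ct^{2}\right)$,
\[
\kappa\left(R^{T}Q\right)\le\frac{1+C\sqrt{\left(r+1\right)/r^{\prime}}+t/\sqrt{r^{\prime}}}{1-C\sqrt{\left(r+1\right)/r^{\prime}}-t/\sqrt{r^{\prime}}}.
\]
Substituting this into the deterministic inequality produces the claimed bound, with the constants $C=C_{K}$ and $c=c_{K}$ carried over unchanged from Lemma~\ref{lem:RQ bound}, and with the sub-Gaussian norm $K=\max_{i}\left\Vert R\left(:,i\right)^{T}Q\right\Vert _{\psi_{2}}$.

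The only point that genuinely requires care, and which I regard as the main obstacle, is reconciling the full-rank (equivalently, invertibility of $\left(R^{T}Q\right)^{T}\left(R^{T}Q\right)$) assumption that was deferred in the proof of Lemma~\ref{lem:conv-bound-with-cond-number} with the probabilistic regime here. The condition number $\kappa\left(R^{T}Q\right)$ is well defined, and the denominator above is strictly positive, exactly when $\sigma_{\min}\left(R^{T}Q\right)>0$, i.e. when $1-C\sqrt{\left(r+1\right)/r^{\prime}}-t/\sqrt{r^{\prime}}>0$. I would therefore restrict attention to the range of $t$ (and, implicitly, to $r^{\prime}$ taken sufficiently larger than $r$) for which the lower bound on $\sigma_{\min}\left(R^{T}Q\right)$ furnished by Lemma~\ref{lem:RQ bound} is strictly positive. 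On that same high-probability event $R^{T}Q$ automatically has full column rank, so the invertibility hypothesis needed to apply Lemma~\ref{lem:conv-bound-with-cond-number} is satisfied there; in this way the single probabilistic lower bound on $\sigma_{\min}\left(R^{T}Q\right)$ simultaneously validates the deferred full-rank assumption and controls the condition number, closing the gap left open earlier.
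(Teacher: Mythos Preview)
Your proposal is correct and follows exactly the route the paper takes: the paper's proof is a one-line statement that the result follows from bounding $\kappa(R^{T}Q)$ in Lemma~\ref{lem:conv-bound-with-cond-number} with Lemma~\ref{lem:RQ bound} via Lemma~\ref{lem:RQ-isotropic}. Your additional care about the full-rank hypothesis is a nice point that the paper handles only implicitly.
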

\begin{proof}
The proof results from bounding $\kappa\left(R^{T}Q\right)$ in Lemma~\ref{lem:conv-bound-with-cond-number}
with Lemma~\ref{lem:RQ bound} via Lemma~\ref{lem:RQ-isotropic}. 
\end{proof}

\subsection{\label{sub:Bounding-the-condition-number}Bounding the condition
number of $B_{k}$}

To bound the condition number of $B_{k}$, we use a modified version
of Theorem~\ref{thm:Vershynin}. If the rows of $B_{k}$ were isotropic
then we could use Theorem~\ref{thm:Vershynin} directly. However,
unlike $R^{T}Q$ in Lemma~\ref{lem:RQ-isotropic}, this is not the
case for $B_{k}$. While the second moment matrix $\Sigma$ is not
the identity, it does play a special role in the bound of the condition
number since it is the matrix $B_{k}$ from the standard ALS algorithm.
To see this, we take the $\hat{l}-$th row of $B_{k}$, $B_{k}(\hat{l},:)=\left\{ \prod_{i\ne k}\left\langle \mathbf{F}_{i}^{\tilde{l}},\mathbf{R}_{i}^{\hat{l}}\right\rangle \right\} _{\tilde{l}=1,\dots,n}$,
and form the second moment matrix of $X$, 
\begin{eqnarray*}
\Sigma\left(l,l^{\prime}\right) & = & \mathbb{E}\left[\prod_{i\ne k}\left\langle \mathbf{F}_{i}^{l},\mathbf{R}_{i}^{\hat{l}}\right\rangle \left\langle \mathbf{F}_{i}^{l^{\prime}},\mathbf{R}_{i}^{\hat{l}}\right\rangle \right]\\
 & = & \prod_{i\ne k}\mathbb{E}\left[\left(\mathbf{F}_{i}^{l}\right)^{T}\mathbf{R}_{i}^{\hat{l}}\left(\mathbf{R}_{i}^{\hat{l}}\right)^{T}\mathbf{F}_{i}^{l^{\prime}}\right]\\
 & = & \prod_{i\ne k}\left(\mathbf{F}_{i}^{l}\right)^{T}\mathbb{E}\left[\mathbf{R}_{i}^{\hat{l}}\left(\mathbf{R}_{i}^{\hat{l}}\right)^{T}\right]\mathbf{F}_{i}^{l^{\prime}}.
\end{eqnarray*}
Since $\mathbf{R}_{i}^{\hat{l}}$ is a vector composed of either Bernoulli
or standard Gaussian random variables, $\mathbb{E}\left[\mathbf{R}_{i}^{\hat{l}}\left(\mathbf{R}_{i}^{\hat{l}}\right)^{T}\right]=I$.
Therefore, we are left with $\Sigma\left(l,l^{\prime}\right)=\prod_{i\ne k}\left(\mathbf{F}_{i}^{l}\right)^{T}\mathbf{F}_{i}^{l^{\prime}}.$ 

We need to modify Theorem~\ref{thm:Vershynin} for matrices that
have independent, non-isotropic rows. In \cite[Remark 5.40]{VERSHY:2012}
it is noted in the case of a random matrix $A\in\mathbb{R}^{N\times n}$
with non-isotropic rows that we can apply Theorem~\ref{thm:Vershynin}
to $A\Sigma^{-\frac{1}{2}}$ instead of $A$. The matrix $A\Sigma^{-\frac{1}{2}}$
has isotropic rows and, thus, we obtain the following inequality that
holds with probability at least $1-2\exp\left(-ct^{2}\right)$, 
\begin{equation}
\left\Vert \frac{1}{N}A^{T}A-\Sigma\right\Vert \le\max\left(\delta,\delta^{2}\right)\left\Vert \Sigma\right\Vert ,\,\,\,\,\textrm{where}\,\,\delta=C\sqrt{\frac{n}{N}}+\frac{t}{\sqrt{N}},\label{eq:conc-around-sigma}
\end{equation}
and $C=C_{K}$, $c=c_{K}>0$.

To clarify how (\ref{eq:conc-around-sigma}) changes the bounds on
the singular values $\sigma_{\min}\left(A\right)$ and $\sigma_{\max}\left(A\right)$
of a matrix $A$ with non-isotropic rows, we modify Lemma~\ref{lem:Vershynin}. 
\begin{lem}
\label{lem:modified-vershynin-lemma}Consider matrices $B\in\mathbb{R}^{N\times n}$
and $\Sigma^{-\frac{1}{2}}\in\mathbb{R}^{n\times n}$ (non-singular)
that satisfy 
\begin{equation}
1-\delta\le\sigma_{\min}\left(B\Sigma^{-\frac{1}{2}}\right)\le\sigma_{\max}\left(B\Sigma^{-\frac{1}{2}}\right)\le1+\delta,\label{eq:vershynin-SV-bounds-B-sigma}
\end{equation}
for $\delta>0$. Then we have the following bounds on the extreme
singular values of $B$: 
\[
\sigma_{\min}\left(\Sigma^{\frac{1}{2}}\right)\cdot\left(1-\delta\right)\le\sigma_{\min}\left(B\right)\le\sigma_{\max}\left(B\right)\le\sigma_{\max}\left(\Sigma^{\frac{1}{2}}\right)\cdot\left(1+\delta\right).
\]

\end{lem}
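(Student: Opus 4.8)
The plan is to write $B=\left(B\Sigma^{-\frac{1}{2}}\right)\Sigma^{\frac{1}{2}}$ and push the hypothesis on the singular values of $B\Sigma^{-\frac{1}{2}}$ through this factorization, using the variational (Rayleigh-quotient) characterization of the extreme singular values together with submultiplicativity. Set $C=B\Sigma^{-\frac{1}{2}}$, so that $B=C\Sigma^{\frac{1}{2}}$; then \eqref{eq:vershynin-SV-bounds-B-sigma} reads $1-\delta\le\sigma_{\min}(C)\le\sigma_{\max}(C)\le1+\delta$, and since $\Sigma^{-\frac{1}{2}}$ is nonsingular so is $\Sigma^{\frac{1}{2}}$, giving $\sigma_{\min}\left(\Sigma^{\frac{1}{2}}\right)>0$.

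For the upper bound I would start from $\sigma_{\max}(B)=\max_{\Vert\mathbf{x}\Vert_{2}=1}\Vert B\mathbf{x}\Vert_{2}$ and estimate, for any unit vector $\mathbf{x}$,
\[
\Vert B\mathbf{x}\Vert_{2}=\left\Vert C\Sigma^{\frac{1}{2}}\mathbf{x}\right\Vert_{2}\le\sigma_{\max}(C)\,\left\Vert \Sigma^{\frac{1}{2}}\mathbf{x}\right\Vert_{2}\le\sigma_{\max}(C)\,\sigma_{\max}\left(\Sigma^{\frac{1}{2}}\right).
\]
Taking the maximum over the unit sphere and using $\sigma_{\max}(C)\le1+\delta$ gives $\sigma_{\max}(B)\le(1+\delta)\,\sigma_{\max}\left(\Sigma^{\frac{1}{2}}\right)$.

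For the lower bound I would use the companion characterization $\sigma_{\min}(B)=\min_{\Vert\mathbf{x}\Vert_{2}=1}\Vert B\mathbf{x}\Vert_{2}$, valid for the tall matrix $B\in\mathbb{R}^{N\times n}$, $N\ge n$, and bound from below, for any unit $\mathbf{x}$,
\[
\Vert B\mathbf{x}\Vert_{2}=\left\Vert C\Sigma^{\frac{1}{2}}\mathbf{x}\right\Vert_{2}\ge\sigma_{\min}(C)\,\left\Vert \Sigma^{\frac{1}{2}}\mathbf{x}\right\Vert_{2}\ge\sigma_{\min}(C)\,\sigma_{\min}\left(\Sigma^{\frac{1}{2}}\right),
\]
where the two inequalities use $\Vert C\mathbf{y}\Vert_{2}\ge\sigma_{\min}(C)\Vert\mathbf{y}\Vert_{2}$ and $\left\Vert \Sigma^{\frac{1}{2}}\mathbf{x}\right\Vert_{2}\ge\sigma_{\min}\left(\Sigma^{\frac{1}{2}}\right)\Vert\mathbf{x}\Vert_{2}$. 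Minimizing over the unit sphere and applying $\sigma_{\min}(C)\ge1-\delta$ yields $\sigma_{\min}(B)\ge(1-\delta)\,\sigma_{\min}\left(\Sigma^{\frac{1}{2}}\right)$, completing the chain.

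The argument is essentially routine, so there is no serious obstacle; the only points requiring care are to invoke the two elementary singular-value inequalities in the correct direction (lower bounds feeding $\sigma_{\min}$, upper bounds feeding $\sigma_{\max}$) and to note that the characterization of $\sigma_{\min}$ as a minimum of $\Vert B\mathbf{x}\Vert_{2}$ over unit vectors is legitimate for the tall $B$. Equivalently, one could state the proof through $\sigma_{\max}(PQ)\le\sigma_{\max}(P)\sigma_{\max}(Q)$ and $\sigma_{\min}(PQ)\ge\sigma_{\min}(P)\sigma_{\min}(Q)$ applied with $P=C$ and $Q=\Sigma^{\frac{1}{2}}$, the latter being valid because the intermediate dimension matches the square factor $\Sigma^{\frac{1}{2}}$; phrasing it via the Rayleigh-type quotients simply makes the orientation of each inequality transparent.
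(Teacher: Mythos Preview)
Your proof is correct and follows essentially the same approach as the paper: both exploit the factorization $B=(B\Sigma^{-1/2})\Sigma^{1/2}$ together with the multiplicative inequalities $\sigma_{\max}(PQ)\le\sigma_{\max}(P)\sigma_{\max}(Q)$ and $\sigma_{\min}(PQ)\ge\sigma_{\min}(P)\sigma_{\min}(Q)$. The only cosmetic difference is that the paper first squares and works with $B^{T}B=\Sigma^{1/2}\bigl(\Sigma^{-1/2}B^{T}B\Sigma^{-1/2}\bigr)\Sigma^{1/2}$ before taking square roots, whereas you apply the variational characterization directly to $B$; your route is slightly more streamlined but mathematically identical.
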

\noindent The proof of Lemma~\ref{lem:modified-vershynin-lemma}
is included in Appendix~A. Using Lemma~\ref{lem:modified-vershynin-lemma},
we observe that the bound on the condition number of a matrix $B$
satisfying (\ref{eq:vershynin-SV-bounds-B-sigma}) has the following
form: 
\[
\kappa\left(B\right)\le\frac{\left(1+\delta\right)}{\left(1-\delta\right)}\kappa\left(\Sigma^{\frac{1}{2}}\right).
\]
Using Lemma~\ref{lem:modified-vershynin-lemma}, we prove an extension
of Theorem~\ref{thm:Vershynin} for matrices with non-isotropic rows. 
\begin{thm}
\label{thm:modified-vershinin-theorem}Let $A$ be an $N\times n$
matrix whose rows, $A\left(i,:\right)$, are independent, sub-Gaussian
random vectors in $\mathbb{R}^{n}$. Then for every $t\ge0$, with
probability at least $1-2\exp\left(-ct^{2}\right)$ one has 
\[
\sigma_{\min}\left(\Sigma^{\frac{1}{2}}\right)\cdot\left(\sqrt{N}-C\sqrt{n}-t\right)\le\sigma_{\min}\left(A\right)\le\sigma_{\max}\left(A\right)\le\sigma_{\max}\left(\Sigma^{\frac{1}{2}}\right)\cdot\left(\sqrt{N}+C\sqrt{n}+t\right).
\]
Here $C=C_{K}$, $c=c_{K}>0$, depend only on the sub-Gaussian norm
$K=\underset{i}{\max}\left\Vert A\left(i,:\right)\right\Vert _{\psi_{2}}$
and the norm of $\Sigma^{-\frac{1}{2}}$.\end{thm}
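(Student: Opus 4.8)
The plan is to reduce the non-isotropic case to the isotropic case already covered by Theorem~\ref{thm:Vershynin} by \emph{whitening} the rows of $A$ with $\Sigma^{-\frac12}$, and then to transfer the resulting singular-value bounds back to $A$ itself using Lemma~\ref{lem:modified-vershynin-lemma} together with a rescaling by $\sqrt{N}$. Throughout I assume, as in Lemma~\ref{lem:modified-vershynin-lemma}, that $\Sigma$ is positive definite so that $\Sigma^{-\frac12}$ exists.

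First I would form the matrix $A\Sigma^{-\frac12}$ and check that its rows are isotropic. Writing the $i$-th row of $A$ as a column vector $X_i$ with $\mathbb{E}\left[X_iX_i^T\right]=\Sigma$, the corresponding row of $A\Sigma^{-\frac12}$ is $\Sigma^{-\frac12}X_i$, whose second moment matrix is $\Sigma^{-\frac12}\mathbb{E}\left[X_iX_i^T\right]\Sigma^{-\frac12}=\Sigma^{-\frac12}\Sigma\Sigma^{-\frac12}=I$, exactly the computation used in Lemma~\ref{lem:RQ-isotropic}. I would then verify that these whitened rows remain sub-Gaussian, with $\left\Vert\Sigma^{-\frac12}X_i\right\Vert_{\psi_2}\le\left\Vert\Sigma^{-\frac12}\right\Vert\,\left\Vert X_i\right\Vert_{\psi_2}$: for any unit vector $\mathbf{x}$ one has $\left\langle\Sigma^{-\frac12}X_i,\mathbf{x}\right\rangle=\left\langle X_i,\Sigma^{-\frac12}\mathbf{x}\right\rangle$ together with $\left\Vert\Sigma^{-\frac12}\mathbf{x}\right\Vert_2\le\left\Vert\Sigma^{-\frac12}\right\Vert$, and sub-Gaussian norms scale linearly under scaling of the test vector. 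This is precisely the step that forces the constants $C=C_K$, $c=c_K$ to depend not only on $K=\max_i\left\Vert A(i,:)\right\Vert_{\psi_2}$ but also on $\left\Vert\Sigma^{-\frac12}\right\Vert$, as the statement records.

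With isotropy and sub-Gaussianity established, I would apply Theorem~\ref{thm:Vershynin} to $A\Sigma^{-\frac12}$ to obtain, for every $t\ge0$ and with probability at least $1-2\exp\left(-ct^2\right)$,
\[
\sqrt{N}-C\sqrt{n}-t\le\sigma_{\min}\left(A\Sigma^{-\frac12}\right)\le\sigma_{\max}\left(A\Sigma^{-\frac12}\right)\le\sqrt{N}+C\sqrt{n}+t.
\]
Dividing through by $\sqrt{N}$ and setting $B=A/\sqrt{N}$ gives
\[
1-\delta\le\sigma_{\min}\left(B\Sigma^{-\frac12}\right)\le\sigma_{\max}\left(B\Sigma^{-\frac12}\right)\le1+\delta,\qquad\delta=C\sqrt{n/N}+t/\sqrt{N},
\]
which is exactly hypothesis (\ref{eq:vershynin-SV-bounds-B-sigma}) of Lemma~\ref{lem:modified-vershynin-lemma}.

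Finally I would invoke Lemma~\ref{lem:modified-vershynin-lemma} to conclude $\sigma_{\min}\left(\Sigma^{\frac12}\right)(1-\delta)\le\sigma_{\min}(B)\le\sigma_{\max}(B)\le\sigma_{\max}\left(\Sigma^{\frac12}\right)(1+\delta)$, then multiply back through by $\sqrt{N}$ using $\sigma_{\min}(B)=\sigma_{\min}(A)/\sqrt{N}$ and $\sigma_{\max}(B)=\sigma_{\max}(A)/\sqrt{N}$. Since $\sqrt{N}(1-\delta)=\sqrt{N}-C\sqrt{n}-t$ and $\sqrt{N}(1+\delta)=\sqrt{N}+C\sqrt{n}+t$, this yields precisely the claimed two-sided bound. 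The argument is largely bookkeeping once the reduction is set up; the only genuinely substantive point, and the step I would be most careful about, is controlling the sub-Gaussian norm of the whitened rows $\Sigma^{-\frac12}X_i$, since it is this control that produces the dependence of the constants on $\left\Vert\Sigma^{-\frac12}\right\Vert$ and legitimizes applying Theorem~\ref{thm:Vershynin} (as opposed to only the concentration estimate (\ref{eq:conc-around-sigma})) to the transformed matrix $A\Sigma^{-\frac12}$.
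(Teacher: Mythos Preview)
Your proposal is correct and follows essentially the same route as the paper: whiten by $\Sigma^{-\frac12}$, apply Theorem~\ref{thm:Vershynin} to the isotropic matrix $A\Sigma^{-\frac12}$, then use Lemma~\ref{lem:modified-vershynin-lemma} with $B=A/\sqrt{N}$ and $\delta=C\sqrt{n/N}+t/\sqrt{N}$ to transfer the bounds back to $A$. The paper carries out the sub-Gaussian norm estimate $\tilde{K}\le K\left\Vert\Sigma^{-\frac12}\right\Vert$ after invoking Theorem~\ref{thm:Vershynin} and then revisits the union bound \eqref{eq:vershynin-union-bound} to make the dependence of $C$ and $c$ on $\left\Vert\Sigma^{-\frac12}\right\Vert$ explicit (arriving at $C=K^{2}\left\Vert\Sigma^{-\frac12}\right\Vert^{2}\sqrt{\ln 9/c_{1}}$), whereas you establish the same estimate up front; the logical content is identical.
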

\begin{proof}
We form the second moment matrix $\Sigma$ using rows $A\left(i,:\right)$
and apply Theorem~(\ref{thm:Vershynin}) to the matrix $A\Sigma^{-\frac{1}{2}}$,
which has isotropic rows. Therefore, for every $t\ge0$, with probability
at least $1-2\exp\left(-ct^{2}\right)$, we have 
\begin{equation}
\sqrt{N}-C\sqrt{n}-t\le\sigma_{\min}\left(A\Sigma^{-\frac{1}{2}}\right)\le\sigma_{\max}\left(A\Sigma^{-\frac{1}{2}}\right)\le\sqrt{N}+C\sqrt{n}+t,\label{eq:thrm-A-sigma-bound}
\end{equation}
where $C=C_{\tilde{K}}$, $c=c_{\tilde{K}}>0$, depend only on the
sub-Gaussian norm $\tilde{K}=\underset{i}{\max}\left\Vert \Sigma^{-\frac{1}{2}}A\left(i,:\right)^{T}\right\Vert _{\psi_{2}}$.
Applying Lemma~\ref{lem:modified-vershynin-lemma} to (\ref{eq:thrm-A-sigma-bound})
with $B=A/\sqrt{N}$ and $\delta=C\sqrt{n/N}+t/\sqrt{N}$, results
in the bound 
\[
\sigma_{\min}\left(\Sigma^{\frac{1}{2}}\right)\cdot\left(\sqrt{N}-C\sqrt{n}-t\right)\le\sigma_{\min}\left(A\right)\le\sigma_{\max}\left(A\right)\le\sigma_{\max}\left(\Sigma^{\frac{1}{2}}\right)\cdot\left(\sqrt{N}+C\sqrt{n}+t\right),
\]
with the same probability as (\ref{eq:thrm-A-sigma-bound}).

To move $\Sigma^{-\frac{1}{2}}$ outside the sub-Gaussian norm, we
bound $\tilde{K}$ from above using the sub-Gaussian norm of $A$,
$K=\underset{i}{\max}\left\Vert A\left(i,:\right)\right\Vert _{\psi_{2}},$
\begin{eqnarray*}
\left\Vert \Sigma^{-\frac{1}{2}}A\left(i,:\right)^{T}\right\Vert _{\psi_{2}} & = & \sup_{x\in\mathcal{S}^{n-1}}\left\Vert \left\langle \Sigma^{-\frac{1}{2}}A\left(i,:\right)^{T},x\right\rangle \right\Vert _{\psi_{2}}\\
 & = & \sup_{x\in\mathcal{S}^{n-1}}\frac{\left\Vert \left\langle A\left(i,:\right)^{T},\Sigma^{-\frac{1}{2}}x\right\rangle \right\Vert _{\psi_{2}}}{\left\Vert \Sigma^{-\frac{1}{2}}x\right\Vert _{2}}\left\Vert \Sigma^{-\frac{1}{2}}x\right\Vert _{2}\\
 & \le & \sup_{y\in\mathcal{S}^{n-1}}\left\Vert \left\langle A\left(i,:\right)^{T},y\right\rangle \right\Vert _{\psi_{2}}\,\sup_{x\in\mathcal{S}^{n-1}}\left\Vert \Sigma^{-\frac{1}{2}}x\right\Vert _{2}\\
 & = & \left\Vert A\left(i,:\right)^{T}\right\Vert _{\psi_{2}}\left\Vert \Sigma^{-\frac{1}{2}}\right\Vert ,
\end{eqnarray*}
hence $\tilde{K}\le K\,\left\Vert \Sigma^{-\frac{1}{2}}\right\Vert $.
Using this inequality, we bound the probability in (\ref{eq:vershynin-union-bound})
for the case of Theorem~\ref{thm:Vershynin} applied to $A\Sigma^{-\frac{1}{2}}$.
\begin{eqnarray*}
\mathbb{P}\left\{ \max_{x\in\mathcal{N}}\left|\frac{1}{N}\left\Vert A\Sigma^{-\frac{1}{2}}x\right\Vert _{2}^{2}-1\right|\ge\frac{\epsilon}{2}\right\}  & \le & 9^{n}\cdot2\,\exp\left[-\frac{c_{1}}{\tilde{K}^{4}}\left(C^{2}n+t^{2}\right)\right]\\
 & \le & 9^{n}\cdot2\,\exp\left[-\frac{c_{1}}{K^{4}\,\left\Vert \Sigma^{-\frac{1}{2}}\right\Vert ^{4}}\left(C^{2}n+t^{2}\right)\right]\\
 & \le & 2\,\exp\left(-\frac{c_{1}t^{2}}{K^{4}\,\left\Vert \Sigma^{-\frac{1}{2}}\right\Vert ^{4}}\right).
\end{eqnarray*}
The last step, similar to the proof of Theorem~\ref{thm:Vershynin},
comes from choosing $C$ large enough, for example $C=K^{2}\,\left\Vert \Sigma^{-\frac{1}{2}}\right\Vert ^{2}\sqrt{\ln\left(9\right)/c_{1}}$).

\end{proof}
The combination of Lemma~\ref{lem:modified-vershynin-lemma}, the
fact that $\Sigma$ for \eqref{eq:randomized-ALS-B} is the same matrix
as \eqref{eq:B-matrix-definition} (denoted below as $B_{k}^{ALS}$),
and Theorem~\ref{thm:modified-vershinin-theorem}, leads to our bound
on the condition number of $B_{k}$ in \eqref{eq:randomized-ALS-B}
is, for every $t\ge0$ and with probability at least $1-2\exp\left(-ct^{2}\right)$,
\begin{equation}
\kappa\left(B_{k}\right)\le\frac{1+C\sqrt{r/r^{\prime}}+t/\sqrt{r^{\prime}}}{1-C\sqrt{r/r^{\prime}}-t/\sqrt{r^{\prime}}}\kappa\left(\left(B_{k}^{ALS}\right)^{\frac{1}{2}}\right),\label{eq:Bk-cond-num-bound}
\end{equation}
where the definitions of $C$ and $c$ are the same as in Theorem~\ref{thm:modified-vershinin-theorem}.
\begin{rem}
In both \eqref{eq:bound-on-RtQ} and \eqref{eq:Bk-cond-num-bound}
the ratios $r/r^{\prime}$ and $t/\sqrt{r^{\prime}}$ are present.
As both ratios go to zero, our bound on the condition number of $B_{k}$
goes to $\kappa\left(B_{k}\right)\le\kappa\left(\left(B_{k}^{ALS}\right)^{\frac{1}{2}}\right)$,
and the bound on the condition number of $R^{T}Q$ goes to $\kappa\left(R^{T}Q\right)\le1$.
These properties explain our choice to set $r^{\prime}$ as a constant
multiple of $r$ in the randomized ALS algorithm. As with similar
bounds for randomized matrix algorithms, these bounds are pessimistic.
Hence $r^{\prime}$ does not have to be very large with respect to
$r$ in order to get acceptable results. 

Another reason to choose $r^{\prime}$ as a constant multiple of $r$
is the construction of $R_{k}\in\mathbb{R}^{M\times r^{\prime}}$
in \eqref{eq:Khatri-Rao-defintions} and our choice to use Bernoulli
random numbers. If $r^{\prime}$ is too small, there is the danger
of the matrix $R_{k}$ becoming singular due to the size of $M$ and
repeated rows. Choosing the constant multiple that defines $r^{\prime}$
large enough helps mitigate this problem. 
\end{rem}

\section{Examples\label{sec:Examples}}

\subsection{Sine function}

Our first test of the randomized ALS algorithm is to reduce a CTD
generated from samples of the multivariate function $\sin\left(z_{1}+\dots+z_{d}\right)$.
This reduction problem was studied in \cite{BEY-MOH:2005}, where
the output of the standard ALS algorithm suggested a new trigonometric
identity yielding a rank $d$ separated representation of $\sin\left(z_{1}+\dots+z_{d}\right)$.
As input, we use standard trigonometric identities to produce a rank
$2^{d-1}$ initial CTD.

We ran $500$ tests using both standard ALS and the new randomized
algorithm to reduce the separation rank of a CTD of samples of $\sin\left(z_{1}+\dots+z_{d}\right)$.
The tests differed in that each one had a different random initial
guess with separation rank $r_{\mathbf{F}}=1$. In this example we
chose $d=5$ and sampled each variable $z_{i}$, $i=1,\dots,d$, with
$M=64$ equispaced samples in the interval $\left[0,2\pi\right]$.
Our input CTD for both algorithms was rank $16$ and was generated
via a standard trigonometric identity. The reduction tolerance for
both algorithms was set to $\epsilon=10^{-5}$, and the maximum number
of iterations per rank, i.e. $max\_iter$ in Algorithm~\ref{alg:Alternating-least-squares}
and Algorithm~\ref{alg:Randomized-alternating-least-squares}, was
set to $1000$. For tests involving the standard ALS algorithm we
used a stuck tolerance of $\delta=10^{-8}$. To test the randomized
ALS algorithm we used $B_{k}$ matrices of size $\left(25\, r_{\mathbf{F}}\right)\times r_{\mathbf{F}}$
and set $max\_tries$ in Algorithm~\ref{alg:Randomized-alternating-least-squares}
to $50$.

According to Lemma 2.4 in \cite{BEY-MOH:2005}, there exists exact
rank $5$ separated representations of $\sin\left(z_{1}+\dots+z_{d}\right)$.
Using $\epsilon=10^{-5}$ for our reduction tolerance, we were able
to find rank $5$ approximations with both standard ALS and our randomized
ALS whose relative errors were less than the requested $\epsilon$
(for a histogram of residuals of the tests, see Figure~\ref{fig:sine-residuals}). 

Due to the random initial guess $\mathbf{F}$ and our choices of parameters
(in particular the stuck tolerance and $max\_tries$) both algorithms
had a small number of runs that did not find rank $5$ approximations
with the requested tolerance $\epsilon$. The randomized ALS algorithm
produced fewer of these outcomes than standard ALS.

Large differences in maximum condition number (of ALS solves) are
illustrated in Figure~\ref{fig:sine-cond-num}, where we compare
tests of the standard and randomized ALS algorithms. We observe that
the maximum condition numbers produced by the randomized ALS algorithm
are much smaller those from the standard ALS algorithm. This is consistent
with our theory. 

Furthermore, as shown in Figure~\ref{fig:sine_iter}, the number
of iterations required for randomized ALS to converge was smaller
than the number required by standard ALS. It is important to remember
that the number of iterations required by the standard ALS algorithm
to reduce a CTD can be optimized by adjusting the tolerance, stuck
tolerance, and maximum number of iterations per rank. In these experiments
we chose the stuck tolerance and maximum number of iterations to reduce
the number of tests of the standard ALS algorithm that did not meet
the requested tolerance $\epsilon$.

\begin{figure}
\begin{centering}
\includegraphics{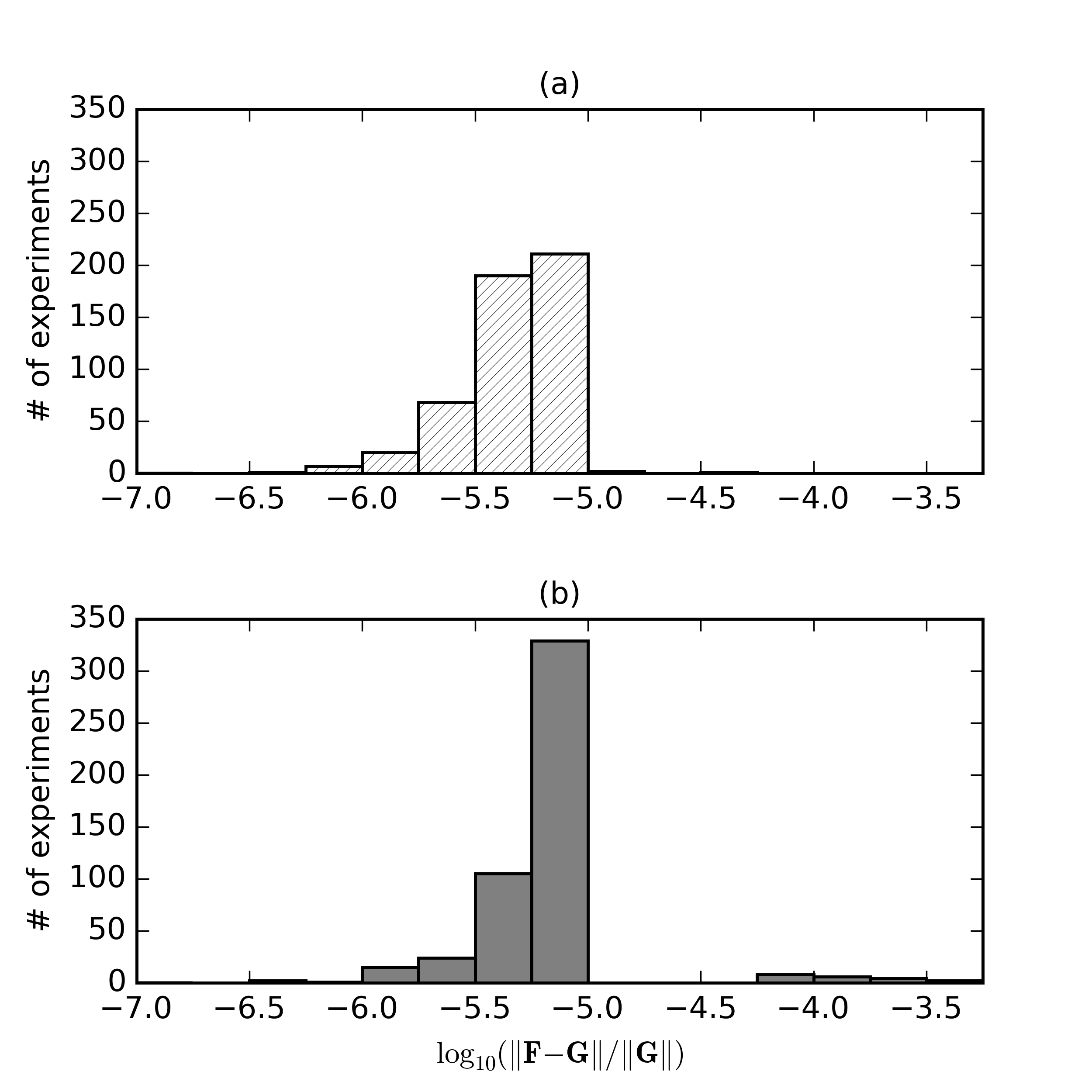} 
\par\end{centering}

\protect\protect\protect\protect\caption{\label{fig:sine-residuals}Histograms displaying ALS reduction residuals,
in $\log_{10}$ scale, for reducing the length of a CTD of samples
of $\sin\left(z_{1}+\dots z_{5}\right)$. The experiments shown in
(a) used randomized ALS, whereas the experiments shown in (b) used
standard ALS. We note that both algorithms produced a small number
of results with approximation errors worse than the requested tolerance.
However, the randomized ALS method produced fewer results that did
not meet our requested tolerance.}
\end{figure}

\begin{figure}
\begin{centering}
\includegraphics{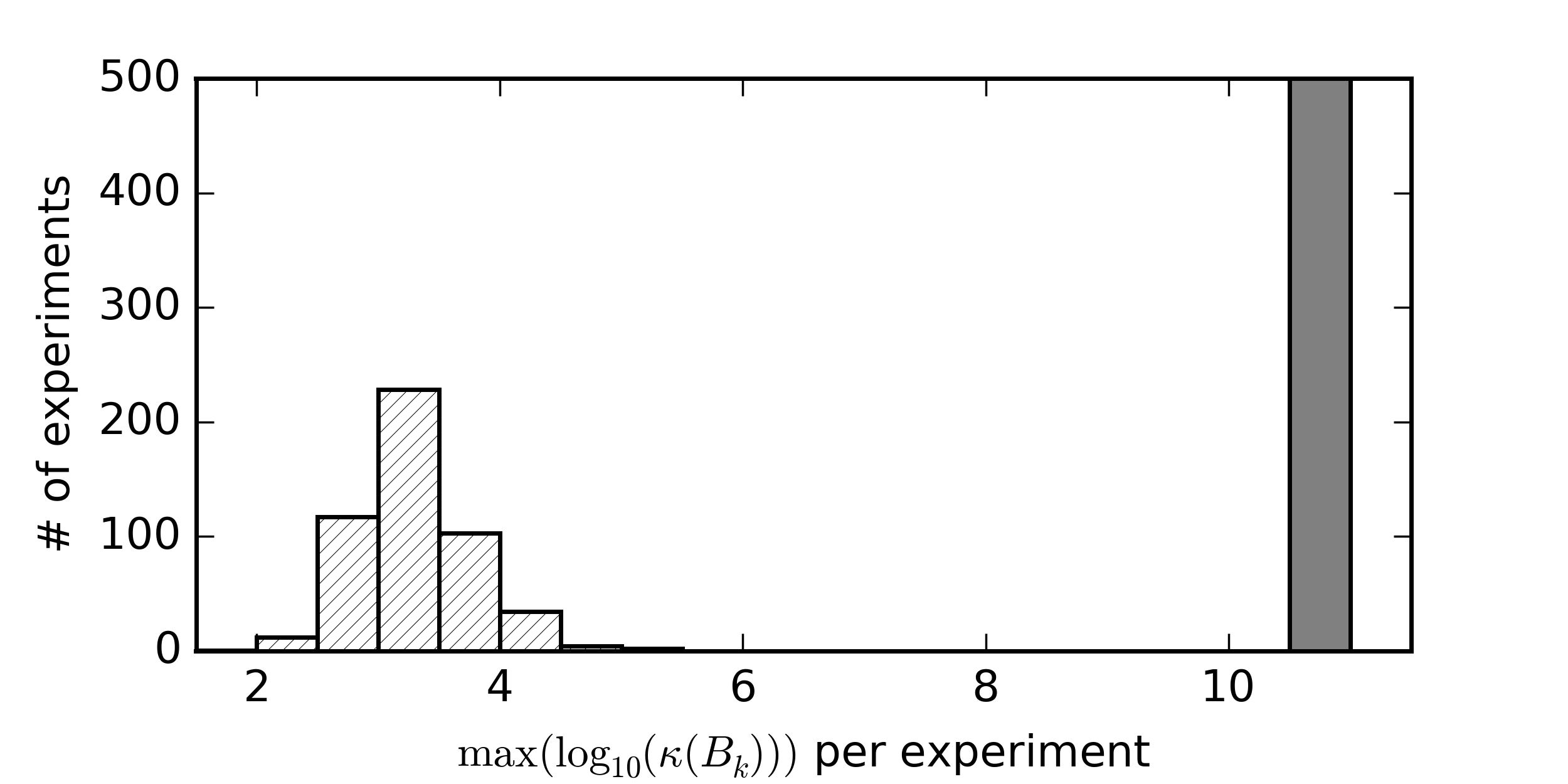} 
\par\end{centering}

\protect\protect\protect\protect\caption{\label{fig:sine-cond-num}Histogram showing the maximum condition
numbers from our experiments reducing the length of a CTD of samples
of $\sin\left(z_{1}+\dots z_{5}\right)$. The condition numbers are
shown in $\log_{10}$ scale; the solid gray pattern represents condition
numbers from standard ALS, while the hatch pattern represents condition
numbers from the randomized ALS algorithm.}
\end{figure}

\begin{figure}
\begin{centering}
\includegraphics{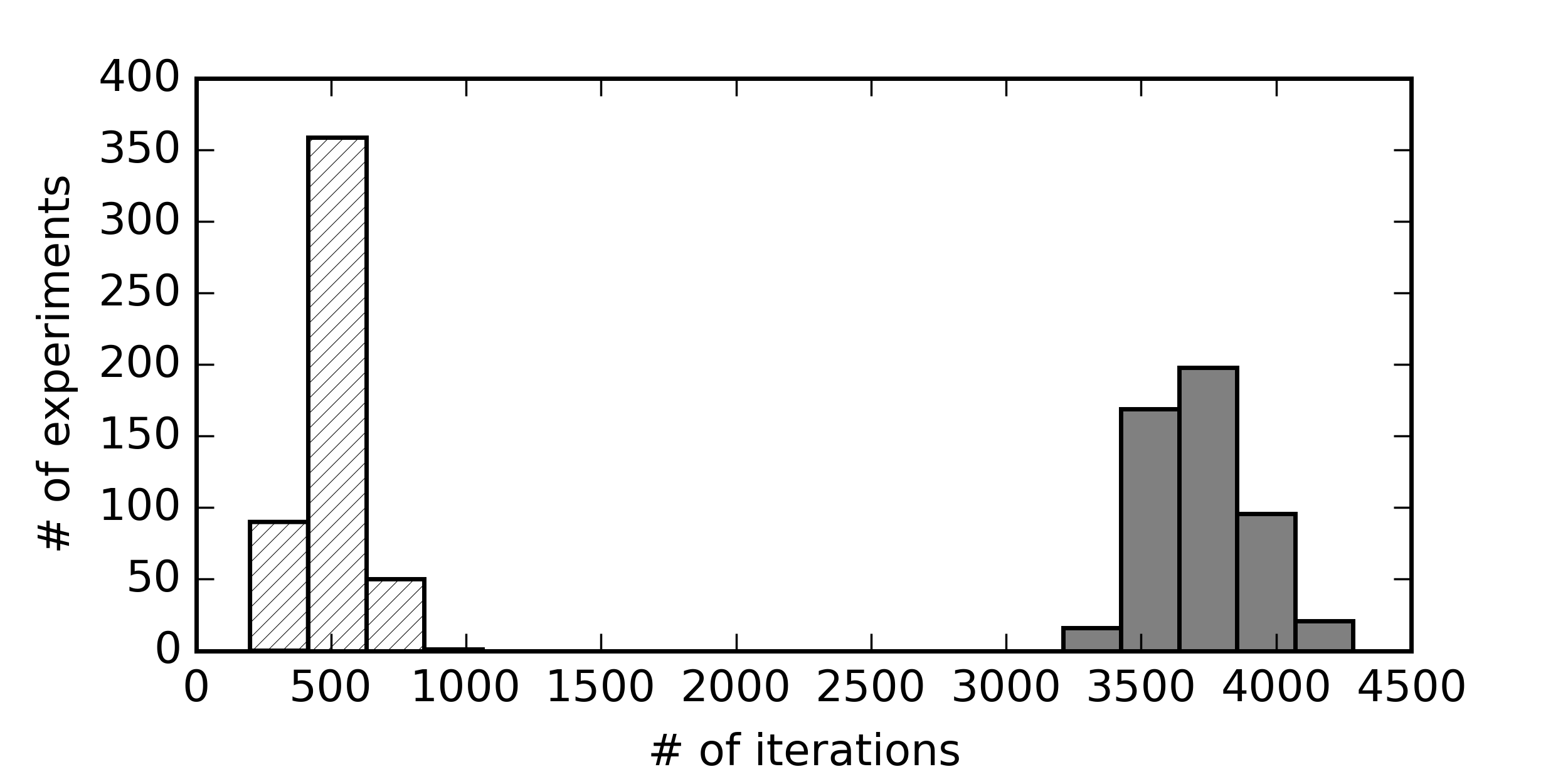} 
\par\end{centering}

\protect\protect\protect\protect\caption{\label{fig:sine_iter}Histogram showing the number of iterations required
by randomized ALS (hatch pattern) and the standard ALS algorithm (gray
pattern) to reduce the length of a CTD of samples of $\sin\left(z_{1}+\dots z_{5}\right)$.}
\end{figure}

\subsection{A manufactured tensor}

Our next test is to compare the performance of the standard and randomized
ALS algorithms on a manufactured random tensor example. To construct
this example we generate factors by drawing $M=128$ random samples
from the standard Gaussian distribution. We chose $d=10$ and set
the separation rank of the input tensor to $r=50$. Then we normalized
the factors and set the s-values of the tensor equal to $s_{l}=e^{-l}$,
$l=0,\dots,r-1$, where $r$ was predetermined such that $s_{end}$
is small.

Similar to the sine example, we ran $500$ experiments and requested
an accuracy of $\epsilon=10^{-4}$ from both algorithms. The maximum
number of iterations for both algorithms was set to $1000$, while
the stuck tolerance for the standard ALS algorithm was set to $10^{-6}$.
We used the following parameters for the randomized ALS algorithm:
the $B_{k}$ matrices were of size $\left(25\, r_{\mathbf{F}}\right)\times r_{\mathbf{F}}$,
and the repetition parameter, $max\_tries$ in Algorithm~\ref{alg:Randomized-alternating-least-squares},
was set to $50$. We started all tests from randomized guesses with
rank $\mbox{r}_{\mathbf{F}}=9$ . This value was chosen because in
all previous runs the reduced separation rank never fell below $r_{\mathbf{F}}=10$.
Such an experiment allows us to compare how the algorithms perform
when the initial approximation has rank greater than one.

We show in Figure~\ref{fig:rand-output-ranks} the output separation
ranks from $500$ tests of both the randomized and standard ALS algorithms.
The CTD outputs from randomized ALS had, on average, lower separation
ranks than those from standard ALS. Furthermore, as seen in Figure~\ref{fig:rand-output-ranks},
some of the output CTDs from the standard ALS algorithm had separation
rank above $40$. In these instances, standard ALS failed to reduce
the separation rank of the input CTD because simple truncation to
$r_{\mathbf{F}}=35$ would have given double precision. These failures
did not occur with the randomized ALS algorithm. We can also see the
contrast in performance in Figure~\ref{fig:rand-residuals}: all
tests of the randomized ALS algorithm produced CTDs with reduced separation
rank whose relative reduction errors were less than the accuracy $\epsilon$.
Also, in Figure~\ref{fig:rand-residuals}, we observe instances where
the standard ALS algorithm failed to output a reduced separation rank
CTD with relative error less than $\epsilon$.

There was a significant difference in the maximum condition numbers
of matrices used in the two algorithms. In Figure~\ref{fig:rand_max_cond},
we see that matrices produced by standard ALS had much larger condition
numbers (by a factor of roughly $10^{10}$) than their counterparts
in the randomized ALS algorithm. Such large condition numbers may
explain the failures of the standard ALS algorithm to output reduced
separation rank CTDs with relative errors less than $\epsilon$.

From Figure~\ref{fig:rand-iterations}, we see that in many cases
standard ALS required fewer iterations than the randomized ALS algorithm
to converge. However, relative to randomized ALS, standard ALS required
a large number of iterations for a considerable fraction of the experiments.

\begin{figure}
\centering{}\includegraphics{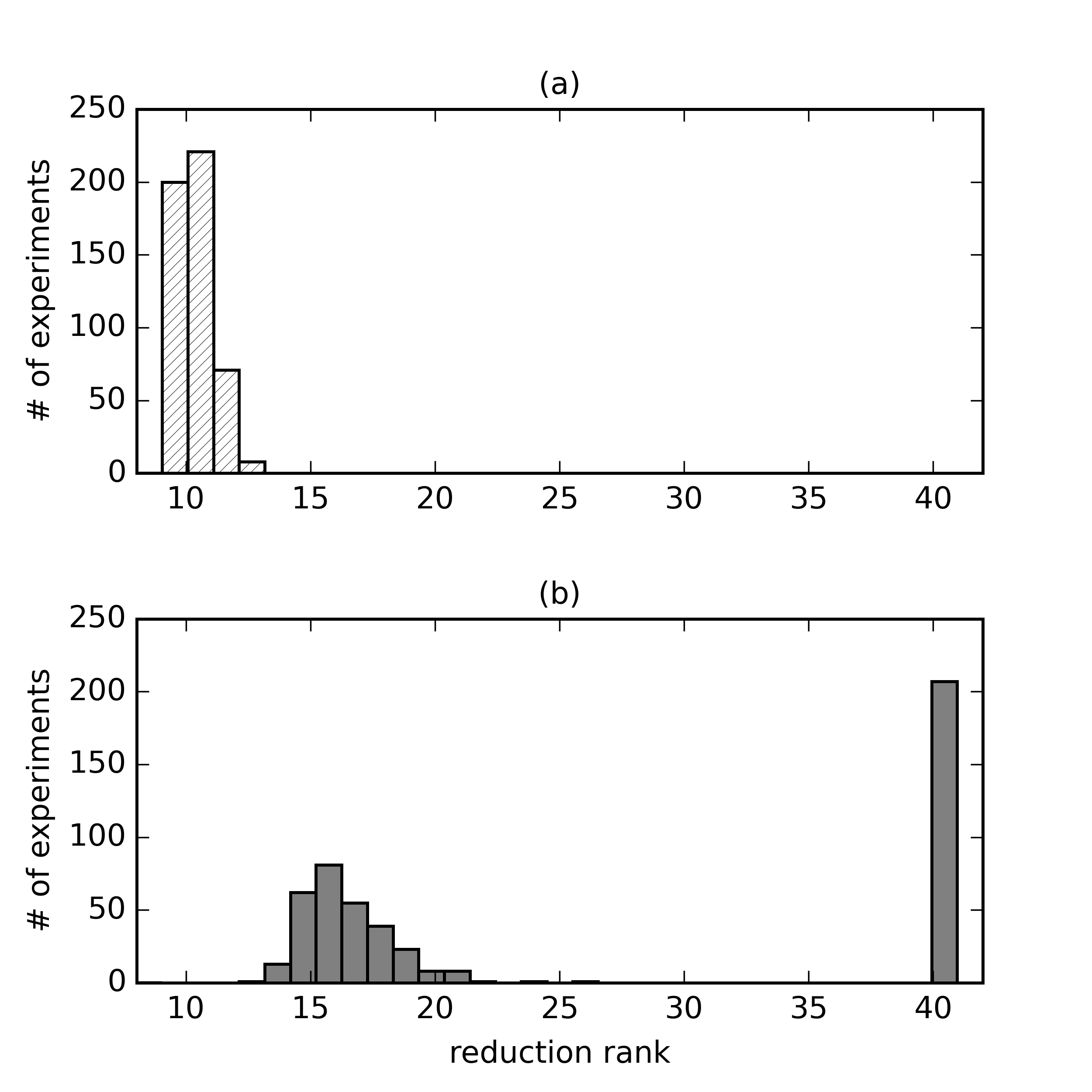}\protect\protect\protect\protect\caption{\label{fig:rand-output-ranks}Histograms showing output ranks from
experiments in reducing the length of CTDs. (a) shows ranks of CTDs
output by the randomized ALS algorithm. (b) shows ranks of CTDs output
by the standard ALS algorithm. The CTDs output by the randomized ALS
method typically have a smaller separation rank. In many examples
the standard ALS algorithm required 40 terms, i.e. it failed since
truncation of the input tensor to $r_{\mathbf{F}}=35$ should give
double precision. }
\end{figure}

\begin{figure}
\begin{centering}
\includegraphics{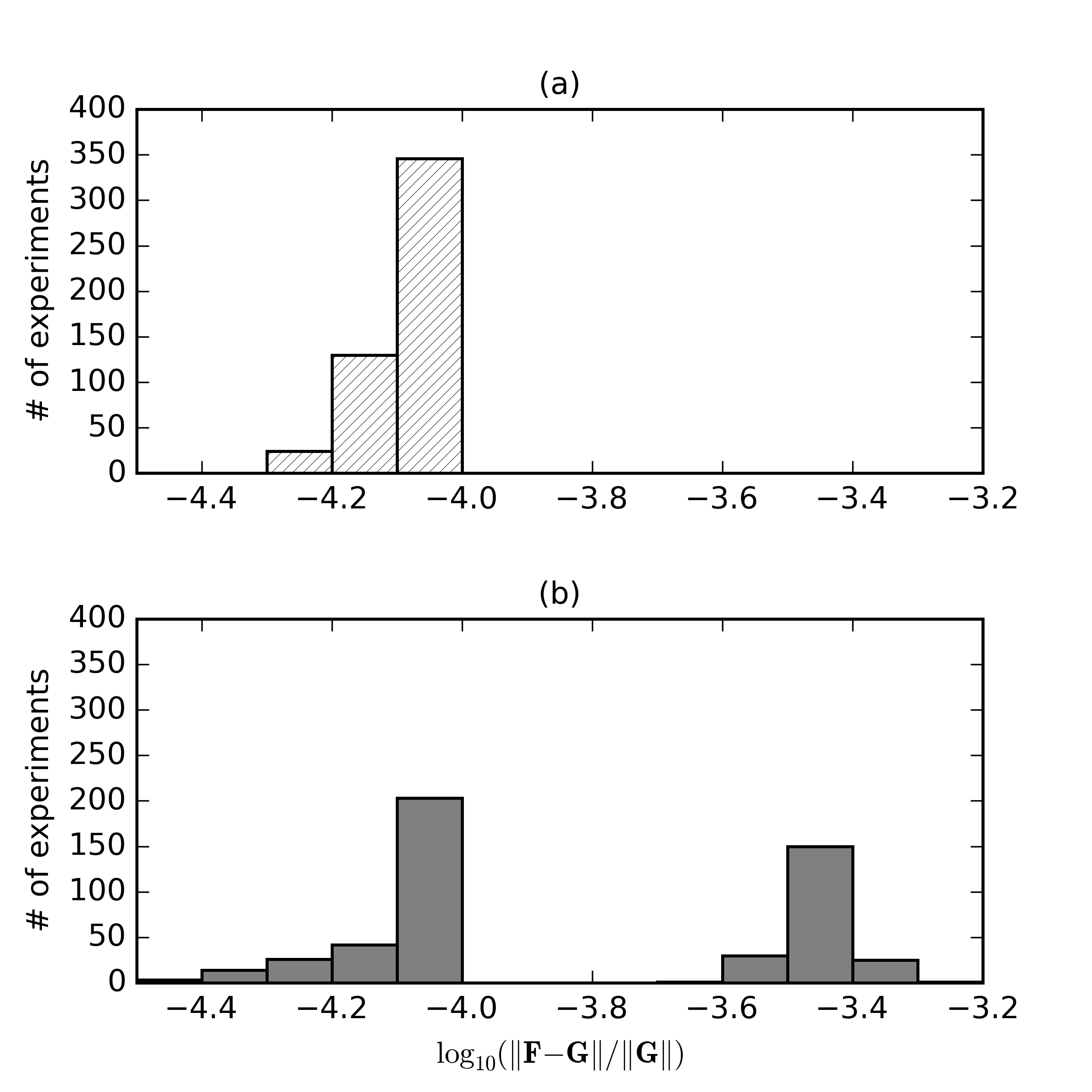} 
\par\end{centering}

\protect\protect\protect\protect\caption{\label{fig:rand-residuals}Histograms displaying ALS reduction errors,
in $\log_{10}$ scale, for reduced-rank CTDs of the random tensor
example. (a) shows that in our 500 tests, the randomized ALS method
always produced a result that met the required tolerance. (b) shows
how the standard ALS algorithm fared with the same problem. Note that
the standard ALS algorithm failed to reach the requested tolerance
in a significant number of tests.}
\end{figure}

\begin{figure}
\centering{}\includegraphics{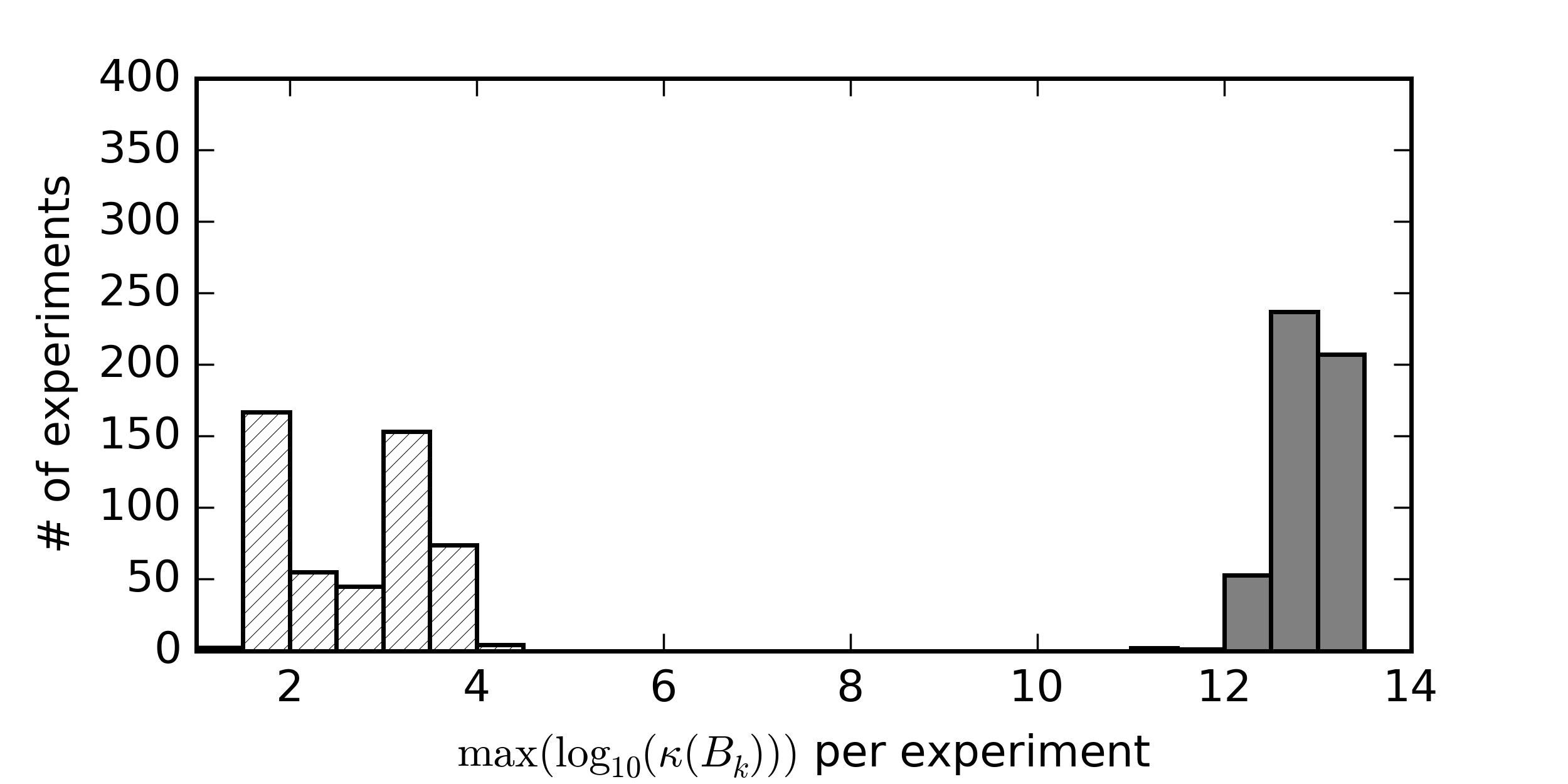}\protect\protect\protect\protect\caption{\label{fig:rand_max_cond}Histogram showing the maximum condition
numbers from experiments in reducing the length of CTDs of the random
tensor example. The condition numbers of $B_{k}$ are shown in $\log_{10}$
scale; solid gray represents condition numbers from standard ALS while
the hatch pattern represents condition numbers from the randomized
ALS algorithm. Similar to the sine example, the condition numbers
from randomized ALS are much smaller than those from the standard
ALS algorithm}
\end{figure}

\begin{figure}
\centering{}\includegraphics{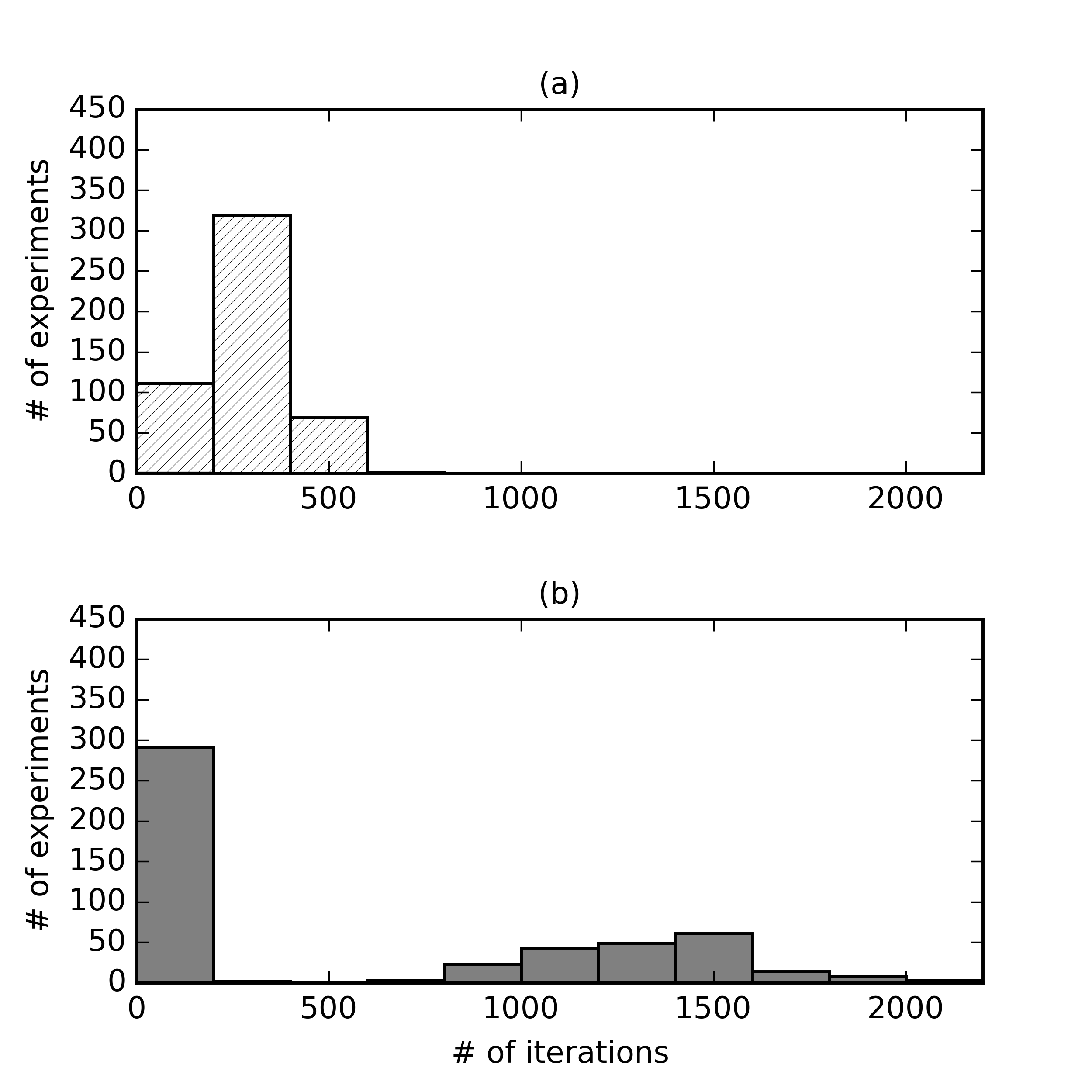}\protect\protect\protect\protect\caption{\label{fig:rand-iterations}Histograms showing iterations required
to produce reduced-length CTDs for the random tensor example. (a)
shows iterations required by randomized ALS, while (b) shows the iterations
required by the standard ALS algorithm. As seen in (b), many examples
using the standard ALS algorithm required large numbers of iterations
to output CTDs.}
\end{figure}

\begin{comment}
In this example we can construct an approximate tensor of the desired
accuracy via truncation. The truncated solution is rank $8$, and
has an error of ... insert error here... We consider our reduction
algorithm successful if it produces a rank of $7,\,8$, or $9$ with
a relative error less than $10^{-3}$. 
\end{comment}

\subsection{Elliptic PDE with random coefficient\label{sub:Elliptic-PDE-rand-coeff}}

\label{subsec:Elliptic} As the key application of the randomized
ALS algorithm, we consider the separated representation of the solution
$u(\mathbf{x},\mathbf{z})$ to the linear elliptic PDE 
\begin{eqnarray}
-\nabla\cdot(a(\mathbf{x},\mathbf{z})\nabla u(\mathbf{x},\mathbf{z})) & = & 1,\quad\mathbf{x}\in\mathcal{D},\label{eqn:twod_elliptic}\\
u(\mathbf{x},\mathbf{z}) & = & 0,\quad\mathbf{x}\in\partial\mathcal{D},\nonumber 
\end{eqnarray}
defined on the unit square $\mathcal{D}=(0,1)\times(0,1)$ with boundary
$\partial\mathcal{D}$. The diffusion coefficient $a(\mathbf{x},\mathbf{z})$
is considered random and is modeled by 
\begin{align}
a(\mathbf{x},\mathbf{z})=a_{0}+\sigma_{a}\sum_{k=1}^{d}\sqrt{\zeta_{k}}\varphi_{k}(\mathbf{x})z_{k},\label{eqn:gaussian_field}
\end{align}
where $\mathbf{z}=(z_{1},\dots,z_{d})$ and the random variables $z_{k}$
are independent and uniformly distributed over the interval $[-1,1]$,
and we choose $a_{0}=0.1$, $\sigma_{a}=0.01$, and $d=5$. In (\ref{eqn:gaussian_field}),
$\{\zeta_{k}\}_{k=1}^{d}$ are the $d$ largest eigenvalues associated
with $\{\varphi_{k}\}_{k=1}^{d}$, the $L_{2}(\mathcal{D})$-orthonormalized
eigenfunctions of the exponential covariance function 
\begin{equation}
C_{aa}(\mathbf{x}_{1},\mathbf{x_{2}})=\exp{\left(-\frac{\Vert\mathbf{x}_{1}-\mathbf{x}_{2}\Vert_{1}}{l_{c}}\right)},\label{eq:GaussCov}
\end{equation}
where $l_{c}$ denotes the correlation length, here set to $l_{c}=2/3$.
Given the choices of parameters $a_{0}$, $\sigma_{a}$, $d$, and
$l_{c}$, the model in (\ref{eqn:gaussian_field}) leads to strictly
positive realizations of $a(\mathbf{x},\mathbf{z})$.

We discretize (\ref{eqn:twod_elliptic}) in the spatial domain $\mathcal{D}$
via triangular finite elements of size $h=1/32$. This discretization,
along with the affine representation of $a(\mathbf{x},\mathbf{z})$
in $z_{k}$, yields the random linear system of equations 
\begin{equation}
\left(K_{0}+\sum_{k=1}^{d}K_{k}z_{k}\right)\mathbf{u}(\mathbf{z})=\mathbf{f},\label{eqn:ex-1-semi-disc}
\end{equation}
for the approximate vector of nodal solutions $\mathbf{u}(\mathbf{z})\in\mathbb{R}^{N}$.
The sparse matrices $K_{0}$ and $K_{k}$ are obtained from the finite
element discretization of the differential operator in (\ref{eqn:twod_elliptic})
assuming $a(\mathbf{x},\mathbf{z})$ is replaced by $\bar{a}$ and
$\sigma_{a}\sqrt{\zeta_{k}}\phi_{k}(\mathbf{x})$, respectively.

To fully discretize (\ref{eqn:ex-1-semi-disc}), we consider the representation
of $\mathbf{u}(\mathbf{z})$ at a tensor-product grid $\left\{ \left(z_{1}(j_{1}),\dots,z_{d}(j_{d})\right):j_{k}=1,\dots,M_{k}\right\} $
where, for each $k$, the grid points $z_{k}(j_{k})$ are selected
to be the Gauss-Legendre abscissas. In our numerical experiments,
we used the same number of abscissas $M_{k}=M=8$ for all $k=1,\dots,d$.
The discrete representation of (\ref{eqn:ex-1-semi-disc}) is then
given by the tensor system of equations 
\begin{equation}
\mathbb{K}\mathbf{U}=\mathbf{F},\label{eqn:tensor_equation}
\end{equation}
where the linear operation $\mathbb{K}\mathbf{U}$ is defined as 
\[
\mathbb{K}\mathbf{U}=\sum_{\hat{l}=0}^{d}\sum_{\tilde{l}=1}^{r_{\mathbf{U}}}s_{\tilde{l}}^{\mathbf{U}}\left(\mathbb{K}_{0}^{\hat{l}}\mathbf{U}_{0}^{\tilde{l}}\right)\circ\left(\mathbb{K}_{1}^{\hat{l}}\mathbf{U}_{1}^{\tilde{l}}\right)\circ\dots\circ\left(\mathbb{K}_{d}^{\hat{l}}\mathbf{U}_{d}^{\tilde{l}}\right),
\]
\[
\mathbb{K}_{0}^{\hat{l}}=K_{\hat{l}},
\]
and for $k=1,\dots,d$, 
\[
\mathbb{K}_{k}^{\hat{l}}=\begin{cases}
D & \hat{l}=k\\
I_{M} & \hat{l}\ne k,
\end{cases}
\]
 where
\[
D=\left[\begin{array}{ccc}
z_{k}\left(1\right) &  & 0\\
 & \ddots\\
0 &  & z_{k}\left(M\right)
\end{array}\right],
\]
and $I_{M}$ is the $M\times M$ identity matrix. The tensor $\mathbf{F}$
in \eqref{eqn:tensor_equation} is defined as
\[
\mathbf{F}=\mathbf{f}\circ{\bf 1}_{M}\circ\dots\circ{\bf 1}_{M},
\]
where ${\bf 1}_{M}$ is an $M$-vector of ones. We seek to approximate
$\mathbf{U}$ in (\ref{eqn:tensor_equation}) with a CTD, 
\begin{equation}
\mathbf{U}=\sum_{\tilde{l}=1}^{r_{\mathbf{U}}}s_{\tilde{l}}^{\mathbf{U}}\mathbf{U}_{0}^{\tilde{l}}\circ\mathbf{U}_{1}^{\tilde{l}}\circ\dots\circ\mathbf{U}_{d}^{\tilde{l}},\label{eqn:tensor_sol}
\end{equation}
where the separation rank $r_{\mathbf{U}}$ will be determined by
a target accuracy. In \eqref{eqn:tensor_sol} $\mathbf{U}_{0}^{\tilde{l}}\in\mathbb{R}^{N}$
and $\mathbf{U}_{k}^{\tilde{l}}\in\mathbb{R}^{M}$, $k=1,\dots,d$.
To solve (\ref{eqn:tensor_equation}), we use a fixed point iteration
similar to those used for solving matrix equations and recently employed
to solve tensor equations in \cite{KHO-SCH:2011}. In detail, the
iteration starts with an initial tensor $\mathbf{U}$ of the form
in (\ref{eqn:tensor_sol}). At each iteration $i$, $\mathbf{U}$
is updated according to %

\[
\mathbf{U}_{i+1}=\left(\mathbb{I}-\mathbb{K}\right)\mathbf{U}_{i}+\mathbf{F},
\]
while requiring $\left\Vert \mathbb{I}-\mathbb{K}\right\Vert <1$.
To assure this requirement is satisfied we solve 
\begin{equation}
\mathbf{U}_{i+1}=c\left(\mathbf{F}-\mathbb{K}\mathbf{U}_{i}\right)+\mathbf{U}_{i},\label{eqn:fixed-pt-it}
\end{equation}
where $c$ is chosen such that $\left\Vert \mathbb{I}-c\,\mathbb{K}\right\Vert <1$.
We compute the operator norm $\left\Vert \mathbb{I}-\mathbb{K}\right\Vert $
via power method; see, e.g., \cite{BEY-MOH:2005,BI-BE-BE:2015}. 

One aspect of applying such an iteration to a CTD is an increase in
the output separation rank. For example, if we take a tensor $\mathbf{U}$
of separation rank $r_{\mathbf{U}}$ and use it as input for (\ref{eqn:fixed-pt-it}),
one iteration would increase the rank to $r_{\mathbf{F}}+\left(d+2\right)\, r_{\mathbf{U}}$.
Therefore we require a reduction algorithm to decrease the separation
rank as we iterate. This is where either the standard or randomized
ALS algorithm is required: to truncate the separated representation
after we have run an iteration. Both ALS methods work with a user-supplied
truncation accuracy $\epsilon$, so we denote the reduction operator
as $\tau_{\epsilon}$. Including this operator into our iteration,
we have 
\begin{equation}
\mathbf{U}_{i+1}=\tau_{\epsilon}\left(c\left(\mathbf{F}-\mathbb{K}\mathbf{U}_{i}\right)+\mathbf{U}_{i}\right).\label{eq:fixed-pt-it-with-reduction}
\end{equation}
Pseudocode for our fixed point algorithm is shown in Algorithm~\ref{alg:recursive_PDE_solver}

\begin{algorithm}[h]
\Input{$\epsilon>0,\;\mu>0,\;\textrm{operator}\;\mathbb{K},\;\mathbf{F},\; c,\; max\_iter,\; max\_rank,$
$\delta>0$ (for standard ALS), $max\_tries$ (for randomized ALS)}

\Initialize $r_{\mathbf{U}}=1$ tensor $\mathbf{U}_{0}=\mathbf{U}_{1}^{1}\circ\dots\circ\mathbf{U}_{d}^{1}$
with either randomly generated 

~~~~~~~~~~~~~~$\mathbf{U}_{k}^{1}$ or $\mathbf{U}_{k}^{1}$
generated from the solution of the nominal equations.

$\mathbf{D}_{0}=\mathbf{F}-\mathbb{K}\mathbf{U}_{0}$

$res=\left\Vert \mathbf{D}_{0}\right\Vert /\left\Vert \mathbf{F}\right\Vert $

$iter=0$

\While{$res>\mu$} {

$iter=iter+1$

$\mathbf{U}_{iter}=c\,\mathbf{D}_{iter-1}+\mathbf{U}_{iter-1}$

$\mathbf{U}_{iter}=\tau_{\epsilon}\left(\mathbf{U}_{iter}\right)$

$\mathbf{D}_{iter}=\mathbf{F}-\mathbb{K}\mathbf{U}_{iter}$

$res=\left\Vert \mathbf{D}_{iter}\right\Vert /\left\Vert \mathbf{F}\right\Vert $

}

\protect

\Return{ $\mathbf{U}_{iter}$}

~

\protect\caption{Fixed point iteration algorithm for solving \eqref{eq:fixed-pt-it-with-reduction}\label{alg:recursive_PDE_solver}}
\end{algorithm}

\begin{rem}
In this example, the separation rank of $\mathbb{K}$ is directly
related to the problem dimension $d$, i.e. $r_{\mathbb{K}}=d+1$,
which is a consequence of using a Karhunen-Loeve-type expansion for
finite-dimensional noise representation of $a(\mathbf{x},\mathbf{z})$.
This will increase the computational cost of the algorithm to more
than linear with respect to $d$, e.g. quadratic in $d$ when an iterative
solver is used and $N\gg M$. Alternatively, one can obtain the finite-dimensional
noise representation of $a(\mathbf{x},\mathbf{z})$ by applying the
separated rank reduction technique of this study on the stochastic
differential operator itself to possibly achieve $r_{\mathbb{K}}<d$.
The interested reader is referred to \cite{BEY-MOH:2002,BEY-MOH:2005}
for more details. 
\end{rem}
First, we examine the convergence of the iterative algorithm given
a fixed ALS reduction tolerance in Figure~\ref{fig:fixed-tolerance-iterative-solve}.
The randomized ALS method converges to more accurate solutions in
all of these tests (see Table~\ref{tab:Table-constant-tol}). However,
the ranks of the randomized ALS solutions are larger than the ranks
required for solutions produced by the standard ALS algorithm.

\begin{figure}
\centering{}\includegraphics{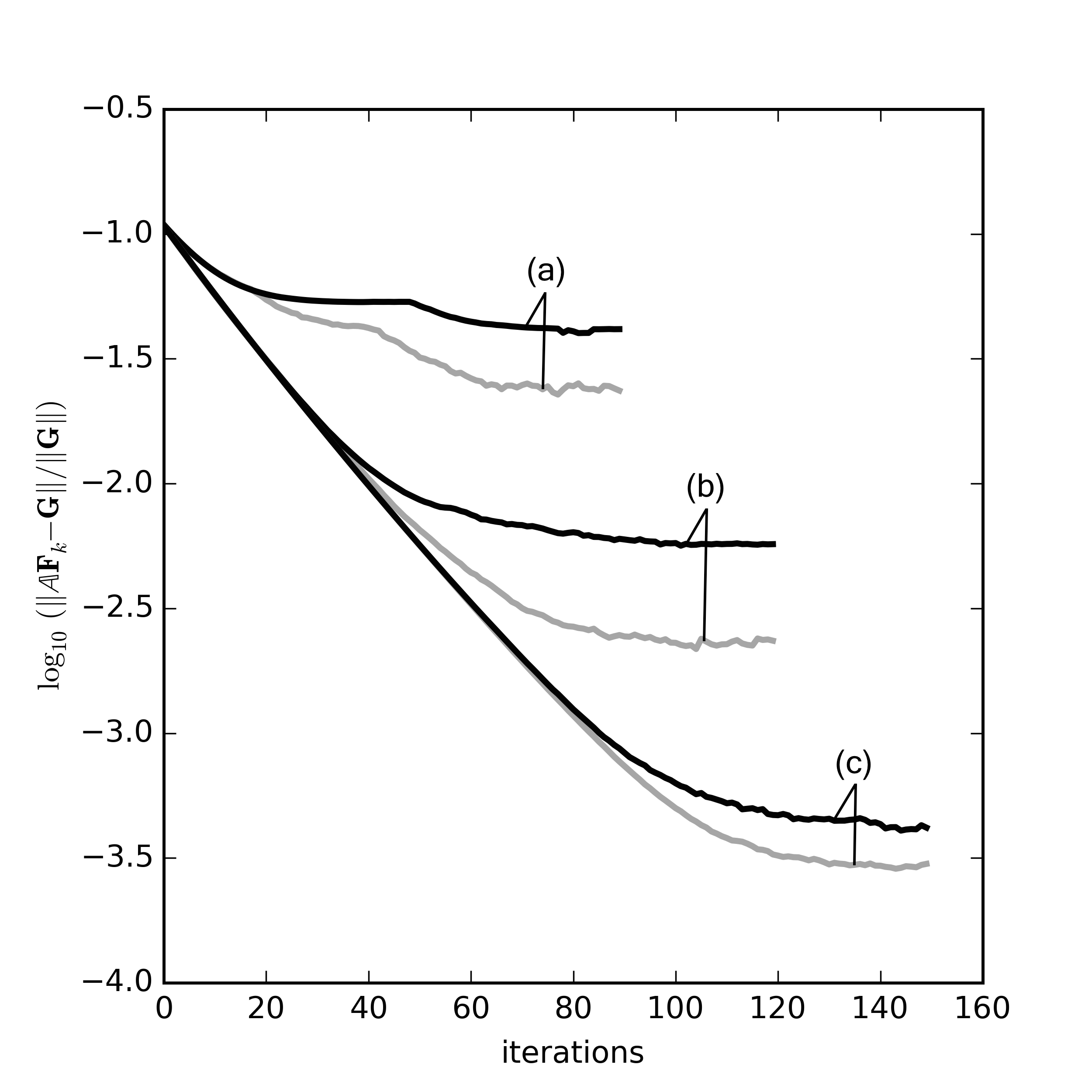}\protect\protect\protect\protect\caption{\label{fig:fixed-tolerance-iterative-solve}Residual error vs. iteration
of results from linear solvers. The black lines represent linear solve
residuals where standard ALS was used for reduction, while the gray
lines represent linear solve residuals where randomized ALS was used
for reduction. In the three examples shown above the ALS tolerances,
for both standard and randomized ALS, were set to $1\times10^{-3}$
for curves labeled (a), $1\times10^{-4}$ for curves labeled (b),
and $1\times10^{-5}$ for curves labeled (c). }
\end{figure}

In Figure~\ref{fig:fixed-rank-iterative-solve}, we observe different
behavior in the relative residuals using fixed ranks instead of fixed
accuracies. For these experiments the ALS-based linear solve using
the standard algorithm out-performs the randomized version, except
in the rank $30$ case (see Table~\ref{tab:Table-constant-rank}).
In this case, the standard ALS algorithm has issues reaching the requested
ALS reduction tolerance, thus leading to convergence problems in the
iterative linear solve. The randomized ALS algorithm does not have
the same difficulty with the rank $r=30$ example. The difference
in performance between standard and randomized ALS for this example
corresponds to a significant difference between the maximum condition
numbers of $B_{k}$. For the $r=30$ case, the maximum condition number
of $B_{k}$ matrices generated by randomized ALS was $3.94\times10^{7}$,
whereas the maximum condition number of $B_{k}$ matrices generated
by standard ALS was $3.00\times10^{13}$.

\begin{figure}
\centering{}\includegraphics{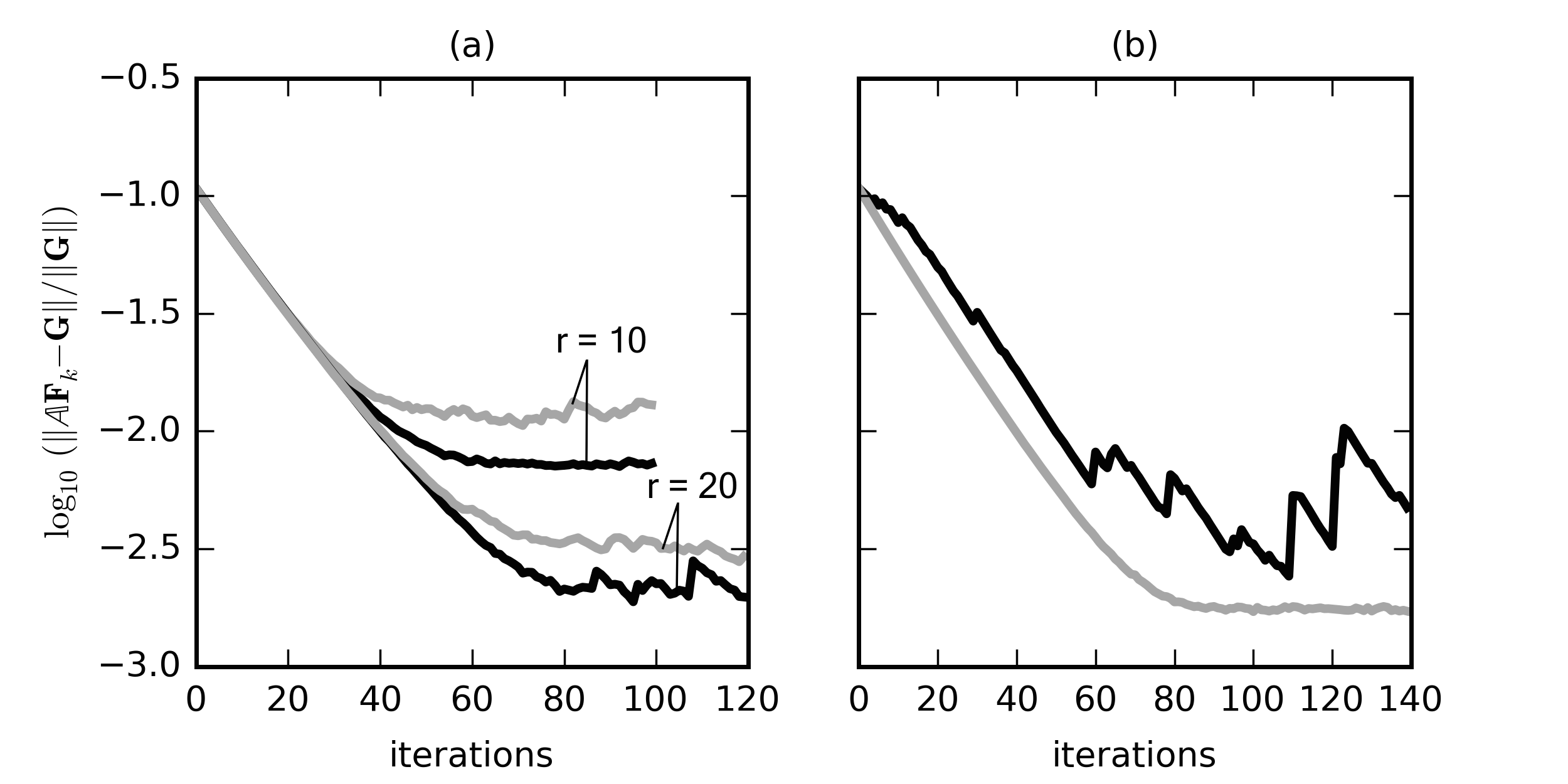}\protect\protect\protect\protect\caption{\label{fig:fixed-rank-iterative-solve}Plots showing relative residuals
of linear solves vs. fixed point iteration number. (a) two linear
solves are shown here: solves for fixed ranks $r=10$, and $r=20$.
Gray lines are residuals corresponding to reductions with randomized
ALS and black lines correspond to reductions with the standard ALS
algorithm. (b) One experiment with $r=30$ and the same color scheme
as in (a).}
\end{figure}

\begin{table}
\global\long\def\arraystretch{1.5}

\begin{centering}
\begin{tabular}{|c|c|c|c|c|c|}
\hline 
ALS type  & ALS tol  & max $\kappa\left(B_{k}\right)$  & max rank  & rank  & residual\tabularnewline
\hline 
\hline 
standard & $1\times10^{-3}$  & $5.35\times10^{1}$  & $5$  & $4$  & $4.16\times10^{-2}$\tabularnewline
\hline 
 & $1\times10^{-4}$  & $5.29\times10^{5}$  & $13$  & $11$  & $5.72\times10^{-3}$\tabularnewline
\hline 
 & $1\times10^{-5}$  & $1.07\times10^{9}$  & $37$  & $34$  & $4.18\times10^{-4}$\tabularnewline
\hline 
randomized  & $1\times10^{-3}$  & $2.59\times10^{2}$  & $7$  & $6$  & $2.36\times10^{-2}$\tabularnewline
\hline 
 & $1\times10^{-4}$  & $3.59\times10^{3}$  & $22$  & $19$  & $2.35\times10^{-3}$\tabularnewline
\hline 
 & $1\times10^{-5}$  & $2.72\times10^{4}$  & $57$  & $54$  & $3.00\times10^{-4}$\tabularnewline
\hline 
\end{tabular}
\par\end{centering}

~%
\begin{comment}
\begin{center}
\begin{tabular}{|c|c|c|c|c|c|}
\hline 
ALS type  & ALS tol  & max $\kappa\left(B_{k}\right)$  & max rank  & rank  & residual\tabularnewline
\hline 
\hline 
standard & $5\times10^{-4}$  & $6.59\times10^{2}$  & $7$  & $6$  & $1.53\times10^{-2}$\tabularnewline
\hline 
 & $5\times10^{-5}$  & $4.57\times10^{6}$  & $17$  & $15$  & $2.45\times10^{-3}$\tabularnewline
\hline 
 & $5\times10^{-6}$  & $9.29\times10^{9}$  & $44$  & $40$  & $2.88\times10^{-4}$\tabularnewline
\hline 
random  & $5\times10^{-4}$  & $6.99\times10^{2}$  & $10$  & $8$  & $9.87\times10^{-3}$\tabularnewline
\hline 
 & $5\times10^{-5}$  & $1.03\times10^{4}$  & $29$  & $27$  & $1.51\times10^{-3}$\tabularnewline
\hline 
 & $5\times10^{-6}$  & $5.51\times10^{4}$  & $73$  & $70$  & $1.33\times10^{-4}$\tabularnewline
\hline 
\end{tabular}
\par\end{center}
\end{comment}

~

\protect\protect\protect\protect\caption{Table containing ranks, maximum condition numbers, and final relative
residual errors of experiments with fixed ALS tolerance.\label{tab:Table-constant-tol}}
\end{table}

\begin{table}
\global\long\def\arraystretch{1.5}

\begin{centering}
\begin{tabular}{|c|c|c|c|c|}
\hline 
ALS type  & ALS tol  & max $\kappa\left(B_{k}\right)$  & rank  & residual\tabularnewline
\hline 
\hline 
standard  & $1\times10^{-5}$  & $9.45\times10^{11}$  & $10$  & $7.29\times10^{-3}$\tabularnewline
\hline 
 & $5\times10^{-6}$  & $1.27\times10^{13}$  & $20$  & $1.97\times10^{-3}$\tabularnewline
\hline 
 & $1\times10^{-6}$  & $3.00\times10^{13}$  & $30$  & $4.73\times10^{-3}$\tabularnewline
\hline 
randomized  & $1\times10^{-5}$  & $9.39\times10^{5}$  & $10$  & $1.30\times10^{-2}$\tabularnewline
\hline 
 & $5\times10^{-6}$  & $4.12\times10^{6}$  & $20$  & $2.93\times10^{-3}$\tabularnewline
\hline 
 & $1\times10^{-6}$  & $3.94\times10^{7}$  & $30$  & $1.72\times10^{-3}$\tabularnewline
\hline 
\end{tabular}
\par\end{centering}

~

~

\protect\protect\protect\protect\caption{Table containing maximum condition numbers and final relative residual
errors of experiments with fixed separation ranks.\label{tab:Table-constant-rank}}
\end{table}

\section{Discussion and conclusions\label{sec:Discussion-and-conclusions}}

We have proposed a new ALS algorithm for reducing the rank of tensors
in canonical format that relies on projections onto random tensors.
Tensor rank reduction is one of the primary operations for approximations
with tensors. Additionally, we have presented a general framework
for the analysis of this new algorithm. The benefit of using such
random projections is the improved conditioning of matrices associated
with the least squares problem at each ALS iteration. While significant
reductions of condition numbers may be achieved, unlike in the standard
ALS, the application of random projections results in a loss of monotonic
error reduction. In order to restore monotonicity, we have employed
a simple rejection approach, wherein several random tensors are applied
and only those that do not increase the error are accepted. This,
however, comes at the expense of additional computational cost as
compared to the standard ALS algorithm. Finally, a set of numerical
experiments has been studied to illustrate the efficiency of the randomized
ALS in improving numerical properties of its standard counterpart.

The optimal choice of random variables to use in the context of projecting
onto random tensors is a question to be addressed in future work.
In our examples we have used signed Bernoulli random variables, a
choice that worked well with both our numerical experiments and analysis.
On the other hand, the limitations of such a construction of random
tensors have been discussed, which motivate further investigations.
Another topic of interest for future work is the extension of the
proposed randomized framework to other tensor formats including the
Tucker, \cite{KOL-BAD:2009}, and tensor-train, \cite{OSELED:2011}.

Finally we have suggested an alternative approach to using projections
onto random tensors that merits further examination. This approach
uses the QR factorization to construct a preconditioner for the least
squares problem at each ALS iteration. Hence it solves the same equations
as the standard ALS, but the matrices have better conditioning. Also,
because it solves the same equations, the monotonic error reduction
property is preserved. This is an important distinction from randomized
ALS, which solves different linear systems, but the solutions to which
are close to the solutions from standard ALS.

\section{Appendix~A\label{sec:Appendix-A}}

First, we prove (\ref{eq:rec-cond-num-ineq}). 
\begin{proof}
To bound the condition number of $AB$ we bound $\sigma_{\max}\left(AB\right)$
from above and $\sigma_{\min}\left(AB\right)$ from below. The bound
we use of $\sigma_{\max}\left(AB\right)$ is straightforward; it comes
from the properties of the two norm, 
\[
\sigma_{\max}\left(AB\right)\le\sigma_{\max}\left(A\right)\sigma_{\max}\left(B\right).
\]
To bound $\sigma_{\min}\left(AB\right)$ we first note that $AA^{T}$
is nonsingular, and write $\sigma_{\min}\left(AB\right)$ as follows,
\[
\sigma_{\min}\left(AB\right)=\left\Vert AA^{T}\left(AA^{T}\right)^{-1}AB\mathbf{x}^{*}\right\Vert _{2},
\]
where $\left\Vert \mathbf{x}^{*}\right\Vert =1$ is the value of $\mathbf{x}$
such that the minimum of the norm is obtained (see the minimax definition
of singular values, e.g. \cite[Theorem 3.1.2]{HOR-JOH:1994}). If
we define $\mathbf{y}=A^{T}\left(AA^{T}\right)^{-1}AB\mathbf{x}^{*}$,
then 
\[
\sigma_{\min}\left(AB\right)=\frac{\left\Vert A\mathbf{y}\right\Vert \cdot\left\Vert \mathbf{y}\right\Vert }{\left\Vert \mathbf{y}\right\Vert }\ge\sigma_{\min}\left(A\right)\cdot\left\Vert \mathbf{y}\right\Vert ,
\]
from the minimax definition of singular values. To bound $\left\Vert \mathbf{y}\right\Vert $,
we observe that $A^{T}\left(AA^{T}\right)^{-1}AB$ is the projection
of $B$ onto the row space of $A$. Denoting this projection as $P_{A^{T}}\left(B\right)$
we have, 
\[
\left\Vert \mathbf{y}\right\Vert =\left\Vert P_{A^{T}}\left(B\right)\mathbf{x}^{*}\right\Vert \ge\sigma_{\min}\left(P_{A^{T}}\left(B\right)\right),
\]
since $\left\Vert \mathbf{x}^{*}\right\Vert =1$. Combining our bounds
on the first and last singular values gives us the bound on the condition
number, 
\[
\kappa\left(AB\right)=\frac{\sigma_{\max}\left(A\right)\sigma_{\max}\left(B\right)}{\sigma_{\min}\left(A\right)\sigma_{\min}\left(P_{A^{T}}\left(B\right)\right)}=\kappa\left(A\right)\cdot\frac{\sigma_{\max}\left(B\right)}{\sigma_{\min}\left(P_{A^{T}}\left(B\right)\right)}.
\]

\end{proof}
The proof of Theorem~\ref{thm:Vershynin} is broken down into three
steps in order to control $\left\Vert A\mathbf{x}\right\Vert $ for
all $\mathbf{x}$ on the unit sphere: an approximation step, where
the unit sphere is covered using a finite epsilon-net $\mathcal{N}$
(see \cite[Section 5.2.2]{VERSHY:2012} for background on nets); a
concentration step, where tight bounds are applied to $\left\Vert A\mathbf{x}\right\Vert $
for every $\mathbf{x}\in\mathcal{N}$; and the final step where a
union bound is taken over all the vectors $\mathbf{x}\in\mathcal{N}$. 
\begin{proof}
(of Theorem~\ref{thm:Vershynin})

Vershynin observes that if we set $B$ in Lemma~\ref{lem:Vershynin}
to $A/\sqrt{N}$, the bounds on the extreme singular values $\sigma_{\min}\left(A\right)$
and $\sigma_{\max}\left(A\right)$ in (\ref{eq:Vershynin-theorem-bounds})
are equivalent to 
\begin{equation}
\left\Vert \frac{1}{N}A^{T}A-I\right\Vert <\max\left(\delta,\delta^{2}\right)=:\epsilon,\label{eq:vershynin-op-bound}
\end{equation}
where $\delta=C\sqrt{\frac{n}{N}}+\frac{t}{\sqrt{N}}$. In the approximation
step of the proof, he chooses a $\frac{1}{4}$-net $\mathcal{N}$
to cover the unit sphere $\mathcal{S}^{n-1}$. Evaluating the operator
norm (\ref{eq:vershynin-op-bound}) on $\mathcal{N}$, it is sufficient
to show 
\[
\max_{x\in\mathcal{N}}\left|\frac{1}{N}\left\Vert A\mathbf{x}\right\Vert _{2}^{2}-1\right|<\frac{\epsilon}{2},
\]
with the required probability to prove the theorem.

Starting the concentration step, \cite{VERSHY:2012} defines $Z_{i}=\left\langle A_{i},\mathbf{x}\right\rangle $,
where $A_{i}$ is the $i$-th row of $A$ and $\left\Vert \mathbf{x}\right\Vert _{2}=1$.
Hence, the vector norm may be written as 
\begin{equation}
\left\Vert A\mathbf{x}\right\Vert _{2}^{2}=\sum_{i=1}^{N}Z_{i}^{2}.\label{eq:vector-norm}
\end{equation}
Using an exponential deviation inequality to control (\ref{eq:vector-norm}),
and that $K\ge\frac{1}{\sqrt{2}}$, the following probabilistic bound
for a fixed $\mathbf{x}\in\mathcal{S}^{n-1}$ is, 
\begin{eqnarray}
\mathbb{P}\left\{ \left|\frac{1}{N}\left\Vert A\mathbf{x}\right\Vert _{2}^{2}-1\right|\ge\frac{\epsilon}{2}\right\}  & = & \mathbb{P}\left\{ \left|\frac{1}{N}\sum_{i=1}^{N}Z_{i}^{2}-1\right|\ge\frac{\epsilon}{2}\right\} \le2\,\exp\left[-\frac{c_{1}}{K^{4}}\min\left(\epsilon^{2},\epsilon\right)N\right]\nonumber \\
 & = & 2\,\exp\left[-\frac{c_{1}}{K^{4}}\delta^{2}N\right]\le2\,\exp\left[-\frac{c_{1}}{K^{4}}\left(C^{2}n+t^{2}\right)\right],\label{eq:vershynin-prob-norm-bound}
\end{eqnarray}
where $c_{1}$ is an absolute constant.

Finally, (\ref{eq:vershynin-prob-norm-bound}) is applied to every
vector $\mathbf{x}\in\mathcal{N}$ resulting in the union bound, 
\begin{equation}
\mathbb{P}\left\{ \max_{x\in\mathcal{N}}\left|\frac{1}{N}\left\Vert A\mathbf{x}\right\Vert _{2}^{2}-1\right|\ge\frac{\epsilon}{2}\right\} \le9^{n}\cdot2\,\exp\left[-\frac{c_{1}}{K^{4}}\left(C^{2}n+t^{2}\right)\right]\le2\,\exp\left(-\frac{c_{1}t^{2}}{K^{4}}\right),\label{eq:vershynin-union-bound}
\end{equation}
where we arrive at the second inequality by choosing a sufficiently
large $C=C_{K}$ (\cite{VERSHY:2012} gives the example $C=K^{2}\sqrt{\ln\left(9\right)/c_{1}}$). 
\end{proof}
We now prove Lemma~\ref{lem:modified-vershynin-lemma}. 
\begin{proof}
To prove this lemma we use the following inequality derived from (\ref{eq:vershynin-SV-bounds-B-sigma}),
\[
\left(1-\delta\right)^{2}\le\sigma_{\min}\left(\Sigma^{-\frac{1}{2}}B^{T}B\Sigma^{-\frac{1}{2}}\right)\le\sigma_{\max}\left(\Sigma^{-\frac{1}{2}}B^{T}B\Sigma^{-\frac{1}{2}}\right)\le\left(1+\delta\right)^{2}.
\]
First we bound $\sigma_{\max}\left(B\right)$ from above: 
\begin{eqnarray*}
\sigma_{\max}\left(B\right)^{2} & \le & \left\Vert \Sigma^{\frac{1}{2}}\right\Vert \cdot\left\Vert \Sigma^{-\frac{1}{2}}B^{T}B\Sigma^{-\frac{1}{2}}\right\Vert \cdot\left\Vert \Sigma^{\frac{1}{2}}\right\Vert \\
 & \le & \left\Vert \Sigma^{\frac{1}{2}}\right\Vert ^{2}\cdot\sigma_{\max}\left(\Sigma^{-\frac{1}{2}}B^{T}B\Sigma^{-\frac{1}{2}}\right)\\
 & \le & \sigma_{\max}\left(\Sigma^{\frac{1}{2}}\right)^{2}\cdot\left(1+\delta\right)^{2},
\end{eqnarray*}
implying $\sigma_{\max}\left(B\right)\le\sigma_{\max}\left(\Sigma^{\frac{1}{2}}\right)\cdot\left(1+\delta\right).$
Second we bound $\sigma_{\min}\left(B\right)$ from below: 
\begin{eqnarray}
\sigma_{\min}\left(B\right)^{2} & = & \sigma_{\min}\left(\Sigma^{\frac{1}{2}}\Sigma^{-\frac{1}{2}}B^{T}B\Sigma^{-\frac{1}{2}}\Sigma^{\frac{1}{2}}\right)\nonumber \\
 & \ge & \sigma_{\min}\left(\Sigma^{\frac{1}{2}}\right)^{2}\cdot\sigma_{\min}\left(\Sigma^{-\frac{1}{2}}B^{T}B\Sigma^{-\frac{1}{2}}\right)\nonumber \\
 & \ge & \sigma_{\min}\left(\Sigma^{\frac{1}{2}}\right)^{2}\cdot\left(1-\delta\right)^{2},\label{eq:smin-ineq-B}
\end{eqnarray}
implying $\sigma_{\min}\left(B\right)\ge\sigma_{\min}\left(\Sigma^{\frac{1}{2}}\right)\cdot\left(1-\delta\right).$
The first inequality in (\ref{eq:smin-ineq-B}) is from \cite[prob. 3.3.12]{HOR-JOH:1994}.
Finally, using properties of singular values we combine the inequalities:
\[
\sigma_{\min}\left(\Sigma^{\frac{1}{2}}\right)\cdot\left(1-\delta\right)\le\sigma_{\min}\left(B\right)\le\sigma_{\max}\left(B\right)\le\sigma_{\max}\left(\Sigma^{\frac{1}{2}}\right)\cdot\left(1+\delta\right).
\]

\end{proof}

\bibliographystyle{plain}
\bibliography{common}

\begin{thebibliography}{10}

\bibitem{APP-DAV:1981}
C.J. Appellof and E.R. Davidson.
\newblock Strategies for analyzing data from video fluorometric monitoring of
  liquid chromatographic effluents.
\newblock {\em Analytical Chemistry}, 53(13):2053--2056, 1982.

\bibitem{BA-BE-BR:2008}
B.W. Bader, M.W. Berry, and M.Browne.
\newblock Discussion tracking in enron email using parafac.
\newblock In {\em Survey of Text Mining II}, pages 147--163. Springer London,
  2008.

\bibitem{BEY-MOH:2002}
G.~Beylkin and M.~J. Mohlenkamp.
\newblock Numerical operator calculus in higher dimensions.
\newblock {\em Proc. Natl. Acad. Sci. USA}, 99(16):10246--10251, August 2002.

\bibitem{BEY-MOH:2005}
G.~Beylkin and M.~J. Mohlenkamp.
\newblock Algorithms for numerical analysis in high dimensions.
\newblock {\em SIAM J. Sci. Comput.}, 26(6):2133--2159, July 2005.

\bibitem{BI-BE-BE:2015}
D.~J. Biagioni, D.~Beylkin, and G.~Beylkin.
\newblock Randomized interpolative decomposition of separated representations.
\newblock {\em Journal of Computational Physics}, 281:116--134, 2015.

\bibitem{BRO:1997}
R.~Bro.
\newblock Parafac. {T}utorial \& {A}pplications.
\newblock In {\em Chemom. {I}ntell. {L}ab. {S}yst., {S}pecial {I}ssue 2nd
  {I}nternet {C}onf. in {C}hemometrics (incinc'96)}, volume~38, pages 149--171,
  1997.
\newblock
  \texttt{http://www.models.kvl.dk/users/rasmus/presentations/parafac\_tutorial/paraf.htm}.

\bibitem{CAR-CHA:1970}
J.~D. Carroll and J.~J. Chang.
\newblock Analysis of individual differences in multidimensional scaling via an
  {N}-way generalization of {E}ckart-{Y}oung decomposition.
\newblock {\em Psychometrika}, 35:283--320, 1970.

\bibitem{CHE-DON:2005}
Z.~Chen and J.~Dongarra.
\newblock Condition number of gaussian random matrices.
\newblock {\em SIAM J. Matrix Anal. Appl.}, 27(3):603--620, 2005.

\bibitem{C-B-K-A:2007}
P.A. Chew, B.W. Bader, T.G. Kolda, and A.~Abdelali.
\newblock Cross-language information retrieval using parafac2.
\newblock In {\em Proceedings of the 13th ACM SIGKDD International Conference
  on Knowledge Discovery and Data Mining}, KDD '07, pages 143--152, New York,
  NY, USA, 2007. ACM.

\bibitem{Chinesta11}
F.~Chinesta, P.~Ladeveze, and E.~Cueto.
\newblock A short review on model order reduction based on proper generalized
  decomposition.
\newblock {\em Archives of Computational Methods in Engineering},
  18(4):395--404, 2011.

\bibitem{DOO-IAC:2009}
A.~Doostan and G.~Iaccarino.
\newblock A least-squares approximation of partial differential equations with
  high-dimensional random inputs.
\newblock {\em Journal of Computational Physics}, 228(12):4332--4345, 2009.

\bibitem{DOO-IAC:2007}
A.~Doostan, G.~Iaccarino, and N.~Etemadi.
\newblock A least-squares approximation of high-dimensional uncertain systems.
\newblock Technical Report Annual Research Brief, Center for Turbulence
  Research, Stanford University, 2007.

\bibitem{DO-VA-IA:2013}
A.~Doostan, A.~Validi, and G.~Iaccarino.
\newblock Non-intrusive low-rank separated approximation of high-dimensional
  stochastic models.
\newblock {\em Comput. Methods Appl. Mech. Engrg.}, 263:42--55, 2013.

\bibitem{EDELMA:1988}
A.~Edelman.
\newblock Eigenvalues and condition numbers of random matrices.
\newblock {\em SIAM J. Matrix Anal. Appl.}, 9(4):543--560, October 1988.

\bibitem{EDELMA:1989}
A.~Edelman.
\newblock {\em Eigenvalues and condition numbers of random matrices}.
\newblock Ph.d. thesis, Massachusetts Institute of Technology, 1989.

\bibitem{GOL-LOA:1996}
G.~Golub and C.~Van Loan.
\newblock {\em Matrix Computations}.
\newblock Johns Hopkins University Press, 3rd edition, 1996.

\bibitem{GR-KR-TO:2013}
L.~Grasedyck, D.~Kressner, and C.~Tobler.
\newblock A literature survey of low-rank tensor approximation techniques.
\newblock {\em CoRR}, abs/1302.7121, 2013.

\bibitem{HACKBU:2012}
W.~Hackbusch.
\newblock {\em Tensor spaces and numerical tensor calculus}, volume~42.
\newblock Springer, 2012.

\bibitem{Hadigol14}
M.~Hadigol, A.~Doostan, H.~Matthies, and R.~Niekamp.
\newblock Partitioned treatment of uncertainty in coupled domain problems: A
  separated representation approach.
\newblock {\em Computer Methods in Applied Mechanics and Engineering},
  274:103--124, 2014.

\bibitem{HA-MA-TR:2011}
N.~Halko, P.-G. Martinsson, and J.~A. Tropp.
\newblock Finding structure with randomness: probabilistic algorithms for
  constructing approximate matrix decompositions.
\newblock {\em SIAM Review}, 53(2):217--288, 2011.

\bibitem{HARSHM:1970}
R.~A. Harshman.
\newblock Foundations of the {P}arafac procedure: model and conditions for an
  ``explanatory'' multi-mode factor analysis.
\newblock Working Papers in Phonetics~16, UCLA, 1970.
\newblock \texttt{http://publish.uwo.ca/$\sim$harshman/wpppfac0.pdf}.

\bibitem{HOR-JOH:1994}
R.~A. Horn and C.~R. Johnson.
\newblock {\em Topics in matrix analysis}.
\newblock Cambridge Univ. Press, Cambridge, 1994.

\bibitem{KHO-SCH:2011}
B.~Khoromskij and C.~Schwab.
\newblock Tensor-structured {G}alerkin approximation of parametric and
  stochastic elliptic {PDEs}.
\newblock {\em SIAM Journal on Scientific Computing}, 33(1):364--385, 2011.

\bibitem{KOL-BAD:2009}
T.~G. Kolda and B.~W. Bader.
\newblock Tensor decompositions and applications.
\newblock {\em SIAM Review}, 51(3):455--500, 2009.

\bibitem{NOUY:2007}
A.~Nouy.
\newblock {A generalized spectral decomposition technique to solve a class of
  linear stochastic partial differential equations}.
\newblock {\em Computer Methods in Applied Mechanics and Engineering},
  196(37-40):4521--4537, 2007.

\bibitem{NOUY:2008}
A.~Nouy.
\newblock {Generalized spectral decomposition method for solving stochastic
  finite element equations: Invariant subspace problem and dedicated
  algorithms}.
\newblock {\em Computer Methods in Applied Mechanics and Engineering},
  197:4718--4736, 2008.

\bibitem{NOUY:2010}
A.~Nouy.
\newblock Proper generalized decompositions and separated representations for
  the numerical solution of high dimensional stochastic problems.
\newblock {\em Archives of Computational Methods in Engineering}, 17:403--434,
  2010.

\bibitem{OSELED:2011}
I.~V. Oseledets.
\newblock Tensor-train decomposition.
\newblock {\em SIAM Journal on Scientific Computing}, 33(5):2295--2317, 2011.

\bibitem{ROK-TYG:2008}
V.~Rokhlin and M.~Tygert.
\newblock A fast randomized algorithm for overdetermined linear least-squares
  regression.
\newblock In {\em Proceedings of the {N}ational {A}cademy of {S}ciences},
  volume 105(36), pages 13212--13217. National {A}cademy of {S}ciences,
  September 2008.

\bibitem{SARLOS:2006}
T.~Sarlos.
\newblock Improved approximation algorithms for large matrices via random
  projections.
\newblock In {\em Foundations of Computer Science, 2006. FOCS '06. 47th Annual
  IEEE Symposium on}, pages 143--152, 2006.

\bibitem{TOM-BRO:2006}
G.~Tomasi and R.~Bro.
\newblock A comparison of algorithms for fitting the {PARAFAC} model.
\newblock {\em Comput. Statist. Data Anal.}, 50(7):1700--1734, 2006.

\bibitem{VERSHY:2012}
R.~Vershynin.
\newblock Introduction to the non-asymptotic analysis of random matrices.
\newblock In Y.~Eldar and G.~Kutyniok, editors, {\em Compressed Sensing, Theory
  and Applications}, chapter~5, pages 210--268. Cambridge University Press,
  2012.

\bibitem{W-L-R-T:2008}
F.~Woolfe, E.~Liberty, V.~Rokhlin, and M.~Tygert.
\newblock A fast randomized algorithm for the approximation of matrices.
\newblock {\em Appl. Comput. Harmon. Anal.}, 25(3):335--366, 2008.

\end{thebibliography}

\end{document}